\documentclass[11pt,reqno]{amsart}

\usepackage[T1]{fontenc}
\usepackage{lmodern}
\usepackage{microtype}
\microtypesetup{expansion=false}
\usepackage{amsmath,amssymb,amsthm,mathtools}
\usepackage{booktabs}
\usepackage{enumitem}
\usepackage{xcolor}
\usepackage[colorlinks=true,linkcolor=blue!40!black,citecolor=blue!40!black,
urlcolor=blue!40!black]{hyperref}
\hypersetup{
 pdftitle={A note on the noncommutative Hodge conjecture for graded matrix factorizations},
 pdfauthor={Xun Lin and Shizhuo Zhang}
}

\numberwithin{equation}{section}
\newtheorem{theorem}{Theorem}[section]
\newtheorem{proposition}[theorem]{Proposition}
\newtheorem{lemma}[theorem]{Lemma}
\newtheorem{corollary}[theorem]{Corollary}
\theoremstyle{definition}

\theoremstyle{remark}
\newtheorem{remark}[theorem]{Remark}

\newcommand{\C}{\mathbb C}
\newcommand{\Q}{\mathbb Q}
\newcommand{\Z}{\mathbb Z}
\newcommand{\PP}{\mathbb P}
\newcommand{\A}{\mathbb A}
\newcommand{\Jac}{\operatorname{Jac}}
\newcommand{\Hdg}{\operatorname{Hdg}}
\newcommand{\HH}{\operatorname{HH}}
\newcommand{\HN}{\operatorname{HN}}
\newcommand{\HP}{\operatorname{HP}}
\newcommand{\MF}{\operatorname{MF}}
\newcommand{\MFgr}{\operatorname{MF}_{\mathrm{gr}}}

\newcommand{\Dgrsg}{\operatorname{D}_{\mathrm{sg}}^{\mathrm{gr}}}
\newcommand{\Perf}{\operatorname{Perf}}
\newcommand{\CH}{\operatorname{CH}}
\newcommand{\Conf}{\operatorname{Conf}}
\newcommand{\rk}{\operatorname{rk}}

\newcommand{\pr}{\operatorname{pr}}
\newcommand{\Span}{\operatorname{Span}}
\newcommand{\Hess}{\operatorname{Hess}}
\newcommand{\GL}{\operatorname{GL}}
\newcommand{\id}{\operatorname{id}}

\title[Hodge classes for split singularities]
{A note on the noncommutative Hodge conjecture for graded matrix factorizations}
\author{Xun Lin}
\address{School of Science and Engineering, The Chinese University of Hong Kong,
Shenzhen, China}
\email{lin-x18@tsinghua.org.cn}
\author{Shizhuo Zhang}
\address{School of Mathematics, Sun Yat-sen University, Guangzhou, China}
\email{zhangshzh28@mail.sysu.edu.cn}
\date{}

\begin{document}
\begin{abstract}
Let $m\ge2$ and $d\ge7$.  We consider homogeneous polynomials in
$2m+2$ variables of the form
\[
 f=F_0(u_0,v_0)+\cdots+F_m(u_m,v_m),
\]
where the $F_i$ are independently very general squarefree binary forms of
degree $d$.  We prove the rational noncommutative Hodge conjecture for the
dg category $\MFgr(f)$ of graded matrix factorizations.  Its Hochschild
homology is the direct sum of the scalar-invariant Jacobian sector and
$d-1$ one-dimensional point sectors
\cite[Theorem~2.6.1]{PolishchukVaintrobCohFT}.  Boundary--bulk images of explicit
rank-one factorizations generate a lattice of rank $(d-1)^{m+1}$ in the
identity sector, while grading shifts of the stabilized residue field
generate all point sectors.  A reduced-Burau calculation shows that the
identity-sector lattice exhausts the rational Hodge classes at a very
general parameter.  Consequently,
\[
 \dim_{\Q}\Hdg\bigl(\MFgr(f),\Q\bigr)=(d-1)^{m+1}+d-1,
\]
and the rational topological $K$-rank is
\[
 \frac{(d-1)^{2m+2}+d-1}{d}+d-1.
\]
Finally, additivity of the noncommutative Hodge conjecture for
semi-orthogonal decompositions
\cite[Section~3.1, Theorem~3.13 and Remark~3.14]{Lin}, applied
to the decompositions in \cite[Theorem~3.11]{Orlov} for
Fano, Calabi--Yau, and general-type hypersurfaces, proves the rational
Hodge conjecture for the associated smooth projective hypersurface.
\end{abstract}
\maketitle
\tableofcontents

\section{Introduction}

\subsection{The problem}

Let $Y$ be a smooth projective complex variety.  Its Hodge decomposition
gives
\[
 H^{2p}(Y,\C)=\bigoplus_{a+b=2p}H^{a,b}(Y),
\]
and the space of rational Hodge classes of codimension $p$ is
\begin{equation}\label{eq:hodge-classical}
 \Hdg^p(Y,\Q):=H^{2p}(Y,\Q)\cap H^{p,p}(Y).
\end{equation}
The cohomology class of every codimension-$p$ algebraic cycle lies in this
intersection.  The rational Hodge conjecture asserts that the converse
holds: the cycle-class map
\begin{equation}\label{eq:cycle-map}
 \operatorname{cl}_{\Q}:\CH^p(Y)_{\Q}\longrightarrow\Hdg^p(Y,\Q)
\end{equation}
is surjective for every $p$; see \cite[Chapter 7]{Voisin}.  Thus the
conjecture asks whether a topologically rational class that has Hodge type
$(p,p)$ must be represented by a rational linear combination of algebraic
subvarieties.

Now let $S=\C[x_0,\ldots,x_{2m+1}]$, let $f\in S$ be homogeneous with an
isolated critical point at the origin, and set
\[
 X_f=\{f=0\}\subset\PP^{2m+1}.
\]
Then $X_f$ is smooth of dimension $2m$.  By weak Lefschetz and Poincar\'e
duality, its cohomology outside the middle degree is inherited from
projective space and its nonprimitive middle class is the power $h^m$ of
the hyperplane class.  Hence the only nonautomatic part of the rational
Hodge conjecture for $X_f$ is the algebraicity of
\[
 H_{\mathrm{prim}}^{2m}(X_f,\Q)\cap H^{m,m}(X_f).
\]

The same homogeneous polynomial defines the category of graded matrix
factorizations
\begin{equation}\label{eq:graded-category-intro}
 \mathcal G_f:=\MFgr(f)\simeq\Dgrsg(S/(f)).
\end{equation}
By \cite[Theorem~3.10]{Orlov}, the second category in
\eqref{eq:graded-category-intro} is the graded singularity category of
$S/(f)$.  If $f$ has an isolated critical point, then $\mathcal G_f$ is
smooth and proper; this also follows from the semi-orthogonal descriptions
in \cite[Theorem~3.11]{Orlov}.  It therefore carries algebraic and
topological $K$-theory, Hochschild and cyclic homology, and the
noncommutative Hodge structure used below.

By \cite[Theorem~3.11]{Orlov}, $\mathcal G_f$ and $\Perf(X_f)$ occur in a
semi-orthogonal decomposition whose remaining components are exceptional.
Since the noncommutative Hodge conjecture is
additive for semi-orthogonal decompositions
\cite[Section~3.1, Theorem~3.13 and Remark~3.14]{Lin}, proving it for $\mathcal G_f$ implies
the corresponding statement for $\Perf(X_f)$ when $d<2m+2$, $d=2m+2$,
or $d>2m+2$.

There are two complementary formulations of the noncommutative Hodge
conjecture.  For an admissible subcategory of the derived category of a
smooth projective variety, the canonical weight-zero Hodge structure on
topological $K$-theory is constructed in
\cite[Section~5.1, Proposition~5.4]{Perry}.
Its integral and rational Hodge classes are defined in
\cite[Section~5.2, Definition~5.9]{Perry}, and the rational algebraicity
conjecture is \cite[Section~5.2, Conjecture~5.11]{Perry}.  For a general
small dg category, the rational Hodge-class space is defined in
\cite[Section~3.1, Definition~3.1]{Lin}, and the corresponding
noncommutative Hodge conjecture is
\cite[Section~3.1, Conjecture~3.2]{Lin}.  The smooth proper reformulation is
given in \cite[Section~3.1, Definition~3.7]{Lin}, and its equivalence with
the general definition is proved in
\cite[Section~3.1, Remark~3.8]{Lin}.  These formulations combine negative
cyclic homology, periodic cyclic homology, Hochschild homology, and the
topological Chern character constructed in \cite[Sections~3--4]{Blanc}.
For an admissible subcategory of a smooth projective derived category, the
equivalence between these two rational formulations is established in
\cite[Section~3.1, Theorem~3.5]{Lin}.

We use the rational formulation
\cite[Section~3.1, Definition~3.1 and Conjecture~3.2]{Lin} directly for the
smooth proper dg category of graded matrix factorizations.
Negative cyclic homology records the Hodge
filtration, topological $K$-theory records the rational lattice, and the map
to $\HH_0$ extracts the weight-zero part.  The noncommutative Hodge problem
asks whether every vector satisfying these three conditions is the
Hochschild Chern character of an algebraic $K$-class.

The graded category has two different types of summands.  Its identity
sector is the scalar-invariant Jacobian state space, whose rational
type-$(0,0)$ vectors are controlled by braid monodromy of the binary
factors.  The other $d-1$ summands are one-dimensional point sectors.  They
are constant of type $(0,0)$ and are generated algebraically by grading
shifts of the stabilized residue field.  The two parts will be treated
separately throughout the proof.

\subsection{Statement of the results}

For a two-dimensional complex vector space $V$, let
\[
 U_d=\{F\in\operatorname{Sym}^d(V^*):F\text{ has }d
 \text{ distinct zeros on }\PP(V)\},
 \qquad B=U_d^{m+1}.
\]
A point $b=(F_0,\ldots,F_m)\in B$ determines
\begin{equation}\label{eq:split-polynomial}
 f_b=\sum_{s=0}^{m}F_s(u_s,v_s),\qquad
 R_b=\C[u_0,v_0,\ldots,u_m,v_m]/(f_b).
\end{equation}
The variables occurring in distinct summands are disjoint.  A statement for a very general
$b$ means that it holds outside a countable union of proper closed algebraic
subsets of $B$.

\begin{theorem}\label{thm:main-graded}
Let $m\ge2$, $d\ge7$, and let $b\in B$ be very general.  The rational
noncommutative Hodge conjecture holds for the dg category of graded matrix
factorizations
\[
 \mathcal G_b:=\MFgr(f_b)\simeq\Dgrsg(R_b).
\]
More precisely, there is a direct-sum decomposition
\[
 \begin{gathered}
 \Hdg(\mathcal G_b,\Q)=A_{b,\Q}\oplus T_{b,\Q},\\
 \dim_{\Q}A_{b,\Q}=(d-1)^{m+1},
 \qquad \dim_{\Q}T_{b,\Q}=d-1.
 \end{gathered}
\]
such that both summands are generated by Hochschild Chern characters of
graded matrix factorizations.  Consequently,
\begin{align}
 \operatorname{ch}\bigl(K_0(\mathcal G_b)_{\Q}\bigr)
   &=\Hdg(\mathcal G_b,\Q),\label{eq:main-ch}\\
 \dim_{\Q}\Hdg(\mathcal G_b,\Q)
   &=(d-1)^{m+1}+d-1,\label{eq:main-hodge-rank}\\
 \rk_{\Q}K_0^{\mathrm{top}}(\mathcal G_b)_{\Q}
   &=\frac{(d-1)^{2m+2}+d-1}{d}+d-1,
 \qquad K_1^{\mathrm{top}}(\mathcal G_b)_{\Q}=0.
 \label{eq:main-top-rank}
\end{align}
\end{theorem}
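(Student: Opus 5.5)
The plan is to split Hochschild homology into sectors, handle the point sectors directly, and reduce the identity sector to a monodromy computation for the binary factors. First, invoke \cite[Theorem~2.6.1]{PolishchukVaintrobCohFT} for the group $\mu_d$ of scalar symmetries acting on $\MFgr(f_b)$. This gives
\[
 \HH_*(\mathcal G_b)=\Jac(f_b)^{\mu_d}\oplus\bigoplus_{\gamma\ne 1}\C_\gamma .
\]
The sum is over the $d-1$ nontrivial elements $\gamma\in\mu_d$, whose fixed loci are the origin. The identity sector is $\bigl(\bigotimes_s \Jac(F_s)\,du_s\,dv_s\bigr)^{\mu_d}$. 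Since $\Jac(F_s)$ has dimension $(d-1)^2$ and the monomial weights are equidistributed, the invariant part has dimension $((d-1)^{2m+2}+d-1)/d$. The same decomposition holds for $\HP$ and for the Chern character from $K^{\mathrm{top}}$ of \cite{Blanc}, and all classes lie in even degree. This gives \eqref{eq:main-top-rank}, including $K_1^{\mathrm{top}}=0$, once I check that $K^{\mathrm{top}}_\Q\otimes\C\cong\HP$. That check holds because $\mathcal G_b$ is an admissible piece of $\Perf(X_{f_b})$ by \cite[Theorem~3.11]{Orlov}, so the lattice conjecture is known.

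\textbf{Point sectors.} These sectors are one-dimensional and constant in $b$, and they have type $(0,0)$. The Hochschild Chern characters of $k^{\mathrm{st}}(j)$, $j=0,\dots,d-1$, project to them through a Vandermonde-type matrix in the characters $\gamma^{j}$. This matrix has rank $d-1$ on the twisted sectors after subtracting the identity-sector components. Taking suitable rational combinations therefore produces $T_{b,\Q}$ of dimension $d-1$. For the rationality I would use the fact that the topological $K$-classes of these shifts already span the twisted-sector part of the lattice.

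\textbf{Identity sector, lower bound.} For each $s$, factor $F_s=\prod_{i}\ell_{s,i}$ into distinct linear forms. The rank-one factorizations $(\ell_{s,i},F_s/\ell_{s,i})$ for the binary forms have boundary--bulk images in $\Jac(F_s)\,du_s\,dv_s$ proportional to the Hessian-residue class $\Hess(F_s)/\ell_{s,i}'$-type elements. These span a $(d-1)$-dimensional subspace, the classes of $d$ points modulo their sum. Take Kn\"orrer/Yoshino tensor products of such factorizations over $s=0,\dots,m$, made graded by the $\mu_d$ choice. Their boundary--bulk images are the tensor products of the binary ones, by multiplicativity of the boundary--bulk map. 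These products span a lattice $A_{b,\Q}$ of rank $(d-1)^{m+1}$ inside the identity sector, and I need to check that it lands in the scalar-invariant part.

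\textbf{Identity sector, upper bound: the main obstacle.} Here I must show that every rational Hodge class in the identity sector lies in $A_{b,\Q}$. The identity sector's rational structure is $H=\bigl(\bigotimes_s H_s\bigr)^{\mu_d}$, where $H_s$ is the reduced cohomology of the Milnor fibre of $F_s$. As $F_s$ varies in $U_d$, the monodromy on $H_s$ is the reduced Burau representation of the braid group $B_d$ at $d$-th roots of unity. This splits into eigen-pieces $H_s^{\chi}$ for $\chi\in\mu_d\setminus\{1\}$. For very general $b$, the Hodge classes form the invariants of the Mumford--Tate group, which contains the Zariski closure $G$ of the monodromy of $\prod B_d$.

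So I need the invariants of $G$ on $\bigoplus_{\chi_0\cdots\chi_m=1}\bigotimes_s H^{\chi_s}_s$. The plan is to use the fact that for $d\ge7$, the reduced Burau images at primitive roots are Zariski dense in the full unitary groups $U(p,q)$ of the eigenspaces (McMullen / Deligne--Mostow type density), with dimensions $\ge2$. The groups for different $s$ are independent, which is where independence of the $F_s$ is used. Then $\bigotimes_s H_s^{\chi_s}$ has invariants only when each factor has an invariant pairing contribution. By Schur, for a tensor of pairwise distinct factor groups the invariants are the tensor product of the invariants of each factor, and a nontrivial irreducible $U(p,q)$-module has none. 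I expect this to force the invariant subspace to be exactly the span of the tensor products of the eigenline classes already realized. The count $(d-1)^{m+1}$ then matches the lower bound, which proves \eqref{eq:main-ch} and \eqref{eq:main-hodge-rank}.

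The delicate point is the non-primitive $\chi$ for composite $d$, where the Burau image may be finite or small. There I would first try a direct weight computation showing that those pieces carry no type-$(0,0)$ vectors beyond $A_{b,\Q}$.
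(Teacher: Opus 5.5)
\textbf{Verdict: genuine gap in the identity-sector upper bound.} The sector decomposition, the Vandermonde argument for the point sectors, the rank-one/Thom--Sebastiani lower bound and the rank count all match the paper. One small correction there: for $d>2m+2$ (in particular always when $m=2$), Orlov's decomposition is $\mathcal G_b=\langle K_b,\dots,\Perf(X_b)\rangle$. So $\mathcal G_b$ is not an admissible piece of $\Perf(X_b)$, and $K^{\mathrm{top}}_\Q\otimes\C\cong\HP$ must come from additivity of $K^{\mathrm{top}}$ and $\HP$ under the decomposition with exceptional complements, as in the paper.

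\textbf{First gap: the Mumford--Tate step.} At a very general point, the generic Mumford--Tate group contains only the identity component $G^0$ of the monodromy closure, not $G$ itself. Taken literally, your version forces every identity-sector Hodge class to be $\prod B_d$-invariant. But on $U_{\mathbf 0}=\bigotimes_\nu W_{\nu,0}$ the braid groups act through $\prod\mathfrak S_d$ on a tensor product of standard representations, which has no invariants. You would therefore ``prove'' that the $(d-1)^{m+1}$ algebraic classes vanish.

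Relatedly, your eigen-decomposition of $H_s$ omits $\chi=1$, so the step where the two bounds meet is missing. Namely, $W_0=\Jac(F)_{d-2}\,du\wedge dv$ has dimension $d-1$ and contains the $\tau$-invariant classes $q_r\,du\wedge dv$, hence $W_0=A_F\otimes\C$, as in \eqref{eq:W0-equals-AF}. The correct statement is that a rational Hodge class at a very general point spans a type-$(0,0)$ sub-local-system. Its monodromy is finite by positivity of the polarization; the paper gets this from Cattani--Deligne--Kaplan. Equivalently, the class is $G^0$-invariant, and $G^0$ acts trivially on $U_{\mathbf 0}$.

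\textbf{Second gap: the monodromy input.} All you need is that for each $r\ne0$, $B_d$ acts irreducibly on $W_r$ with infinite image. Then $W_r^{G^0}$ is $B_d$-stable, hence $0$ (all of $W_r$ would make the image finite), and every multi-sector with some $r_\nu\ne0$ is excluded.
\begin{itemize}
\item You assert Zariski density for primitive characters without proof.
\item For non-primitive ones you propose a ``direct weight computation'', which cannot work. For $2\le r\le d-2$, the sector $U_{(r,d-r,0,\dots,0)}$ meets $\HH_0$ in dimension $\bigl((r-1)^2+(d-r-1)^2\bigr)(d-1)^{m-1}>0$, so Hodge type alone excludes nothing.
\end{itemize}
The paper proves irreducibility from the chain of complex reflections with nonzero adjacent Squier pairings. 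It proves infiniteness from the signature \eqref{eq:squier-signature} of the Squier form on $E_{\zeta^r}$, which is indefinite for $2\le r\le d-2$, $r\ne d/2$. A finite image would also preserve an averaged positive-definite form, and Schur's lemma would make the two forms proportional, a contradiction. At $r=d/2$ the generators are nontrivial transvections. Non-primitive $r$ always lies in this indefinite range, so it is the easy case, not the delicate one. The delicate cases are the definite sectors $r=1,d-1$, which the paper reaches by Galois conjugation to an indefinite sector via Lemma~\ref{lem:orbit}, one place where $d\ge7$ is used.
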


Here $\Hdg(\mathcal G_b,\Q)\subset\HH_0(\mathcal G_b)$ is the rational
Hodge-class space defined in \eqref{eq:lin-hodge}.  The unadorned Chern
character means $\operatorname{ch}_{\HH}$.  The summand $A_{b,\Q}$ lies in
the identity sector and is generated by tensor products of rank-one
factorizations.  The summand $T_{b,\Q}$ is the rational form of the $d-1$
point sectors and is generated by grading shifts of the stabilized residue
field.

The projective consequence is stated separately because its proof uses a
semi-orthogonal decomposition and the additivity theorem
\cite[Section~3.1, Theorem~3.13 and Remark~3.14]{Lin}.

\begin{corollary}\label{cor:projective}
Let
\[
 X_b=\{f_b=0\}\subset\PP^{2m+1}
\]
Then $X_b$ is smooth of dimension $2m$ and satisfies the rational Hodge
conjecture.  Its primitive middle Hodge classes have rank $(d-1)^{m+1}$,
and its full middle rational Hodge group has
rank $(d-1)^{m+1}+1$.
\end{corollary}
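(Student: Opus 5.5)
The corollary follows from Theorem~\ref{thm:main-graded} by transporting Hodge classes across Orlov's semi-orthogonal decomposition, and then counting ranks.

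\emph{Smoothness.} Each $F_s$ is squarefree of degree $d$, so the partials $\partial_{u_s}F_s,\partial_{v_s}F_s$ vanish simultaneously only at $(u_s,v_s)=0$. The summands use disjoint variables, so the Jacobian ideal of $f_b$ has only the origin as common zero. Hence $f_b$ has an isolated critical point at $0$, and $X_b$ is smooth of dimension $2m$ in $\PP^{2m+1}$.

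\emph{Transfer to $\Perf(X_b)$.} Set $n=2m+2$. According to the sign of $n-d$, \cite[Theorem~3.11]{Orlov} gives one of three decompositions:
\begin{itemize}
\item $d<n$: $\Perf(X_b)=\langle \mathcal O(-n+d+1),\ldots,\mathcal O,\mathcal G_b\rangle$;
\item $d=n$: $\Perf(X_b)\simeq\mathcal G_b$;
\item $d>n$: $\mathcal G_b=\langle k(\cdot),\ldots,\Perf(X_b)\rangle$, with $d-n$ exceptional objects.
\end{itemize}
Exceptional objects generate copies of $\Perf(\C)$, for which the noncommutative Hodge conjecture is trivial. In the first two cases, Theorem~\ref{thm:main-graded} together with the additivity statement \cite[Theorem~3.13]{Lin} (both directions, as in \cite[Remark~3.14]{Lin}) gives the noncommutative Hodge conjecture for $\Perf(X_b)$. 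In the third case we need the direction ``whole category implies admissible summand''. This holds because $\HH_0$, $\HN$ and $K^{\mathrm{top}}$ split as direct sums compatibly with Chern characters, so a Hodge class on the component is a Hodge class on the whole category. It is therefore algebraic there, and projecting to the component keeps it algebraic, since projection functors preserve $K_0$.

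Finally, \cite[Theorem~3.5]{Lin} identifies the noncommutative Hodge classes of $\Perf(X_b)$ with Perry's formulation. Via the Mukai-type isomorphism $K^{\mathrm{top}}_0(X_b)_\Q\cong\bigoplus_p H^{2p}(X_b,\Q)$ given by the topological Chern character, these correspond to $\bigoplus_p\Hdg^p(X_b,\Q)$. Algebraicity of $K$-theoretic classes then gives surjectivity of $\operatorname{cl}_\Q$ in every codimension, because $\operatorname{ch}$ of algebraic $K$-classes spans the images of cycles rationally. This proves the rational Hodge conjecture for $X_b$.

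\emph{Rank count.} The main obstacle is matching the two sides of Orlov's decomposition in each degree regime. The exceptional objects contribute only to $T_{b,\Q}$ or to the hyperplane powers $h^p$; none of them touches primitive middle cohomology. The identity sector $\HH_0$ of $\mathcal G_b$, meaning the scalar-invariant Jacobian part in degree $0$, is identified with $H^{2m}_{\mathrm{prim}}(X_b)$ through the Griffiths residue correspondence, and this identification respects rational structures and Hodge type. So $A_{b,\Q}$ corresponds exactly to the primitive middle Hodge classes, which therefore have rank $(d-1)^{m+1}$.

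As a consistency check, compare Euler characteristics. The Euler characteristic of $X_b$ is $2m+1+\bigl((d-1)^{2m+2}+d-1\bigr)/d$. The exceptional pieces plus the $d-1$ point sectors account for the difference from the rank of $K^{\mathrm{top}}_0(\mathcal G_b)$ in \eqref{eq:main-top-rank}, in each of the three cases. Adding the nonprimitive class $h^m$ gives full middle rank $(d-1)^{m+1}+1$.
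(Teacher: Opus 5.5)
Your smoothness argument and your transfer of the conjecture to $\Perf(X_b)$ are the same as in the paper: Orlov's three decompositions, Lin's additivity in both directions, exceptional pieces as copies of $\Perf(\C)$, and the comparison with the cycle-class formulation.

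\textbf{The gap is in the rank count.} You assert that the degree-zero identity sector of $\mathcal G_b$ is identified with $H^{m,m}_{\mathrm{prim}}(X_b)$ by the Griffiths residue correspondence ``respecting rational structures and Hodge type.'' From this you deduce that $A_{b,\Q}$ is exactly $\Hdg^m_{\mathrm{prim}}(X_b,\Q)$. Griffiths' theorem only identifies complex graded pieces of the Hodge filtration on $H^{2m}_{\mathrm{prim}}$ with graded pieces of the Jacobian ring. It says nothing about what Orlov's embedding, equivalence, or projection functor does on $K_0^{\mathrm{top}}(-)_{\Q}$. In particular, it does not show that the rational identity-sector part of $K_0^{\mathrm{top}}(\mathcal G_b)_{\Q}$ is carried onto $H^{2m}_{\mathrm{prim}}(X_b,\Q)$, while $T_{b,\Q}$ and the exceptional classes go to polynomials in $h$. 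Without that compatibility you cannot conclude that rational type-$(0,0)$ classes in the identity sector are the primitive rational Hodge classes of $X_b$. Your claim that the exceptional objects ``only touch $T_{b,\Q}$ or $h^p$'' is likewise asserted rather than proved. The Euler-characteristic check compares only total ranks of topological $K$-theory, not how the Hodge classes are distributed, so it does not close the gap.

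\textbf{How to fix it.} The sector matching is not needed. Additivity makes the Hodge space of a semi-orthogonal decomposition the direct sum of the Hodge spaces of its pieces, and each exceptional piece contributes one dimension. Hence, with $N=2m+2$, in all three regimes
\[
 \dim_{\Q}\Hdg\bigl(\Perf(X_b),\Q\bigr)=\dim_{\Q}\Hdg(\mathcal G_b,\Q)+(N-d)=(d-1)^{m+1}+N-1 .
\]
Through the comparison you already invoked, this space is $\bigoplus_p\Hdg^p(X_b,\Q)$. Weak Lefschetz supplies exactly the $N-1=2m+1$ non-primitive classes $1,h,\ldots,h^{2m}$. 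Therefore $\dim\Hdg^m_{\mathrm{prim}}(X_b)=(d-1)^{m+1}$ and $\dim\Hdg^m(X_b)=(d-1)^{m+1}+1$. This global count is how the paper argues.

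If you want to keep the sector-by-sector picture, you must prove it. For $d<N$, for instance, check that primitive middle classes are Euler-orthogonal to every $\mathcal O_X(j)$, because $\operatorname{ch}(\mathcal O_X(j))$ and $\operatorname{td}(X)$ are polynomials in $h$. Then $H^{2m}_{\mathrm{prim}}(X_b,\Q)$ lies in $K_0^{\mathrm{top}}(\mathcal G_b)_{\Q}$, with complement a $(d-1)$-dimensional space of polynomials in $h$. That argument is again a dimension count rather than a Griffiths-residue identification.
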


\subsection{Structure of the proof}

The identity sector and the point sectors are treated separately.  The
semi-orthogonal additivity theorem is used only after the graded categorical
statement has been proved.
\begin{enumerate}[label=\textup{(\roman*)},leftmargin=2.7em]
\item The equivariant Hochschild formula
\cite[Theorem~2.6.1 and Example~2.6.3]{PolishchukVaintrobCohFT}
decomposes $\HH_*(\mathcal G_b)$ into the identity sector
$\Omega_{f_b}^{T=1}$ and $d-1$ one-dimensional point sectors.
\item For each binary factor, $d$ rank-one factorizations are written down
and their boundary--bulk Chern characters are calculated by
\cite[Section~3]{PolishchukVaintrob}.
\item Thom--Sebastiani tensor products produce $(d-1)^{m+1}$ independent
algebraic classes in the identity sector.  Separately, the $d$ grading shifts
of the stabilized residue field have a Fourier matrix of twisted Chern
characters and span all $d-1$ point sectors; see
\cite[Theorem~2.6.1(ii)]{PolishchukVaintrobCohFT} and
\cite[Proposition~4.3.4]{PolishchukVaintrob}.
\item The roots-of-unity projector computes the dimension of the identity
sector.  Since every Hochschild degree is even, the topological Chern
character \cite[Sections~3--4]{Blanc} gives the topological ranks in
\eqref{eq:main-top-rank} after Hodge--de Rham degeneration.
\item The twisted de Rham filtration, the topological $K$-local system and
the singularity pairing give the identity sectors a polarizable rational
Hodge variation.  The algebraicity of Hodge loci \cite{CDK} turns a
rational Hodge class at a very general parameter into a type-$(0,0)$
sub-local-system, hence into a finite-monodromy subspace.  The reduced-Burau
calculation then shows that the classes constructed in step~(iii) exhaust
the rational Hodge classes in the identity sector.  The point sectors are
already generated algebraically, which proves the conjecture for
$\mathcal G_b$.
\item The decompositions \cite[Theorem~3.11]{Orlov} and additivity of
the noncommutative Hodge conjecture
\cite[Section~3.1, Theorem~3.13 and Remark~3.14]{Lin} transfer the result between
$\mathcal G_b$ and $\Perf(X_b)$.  The comparison with the classical
rational Hodge conjecture in \cite[Section~3.1, Remark~3.9]{Lin} proves the corollary.
\end{enumerate}

\subsection{Related work}

For smooth hypersurfaces in projective space, weak Lefschetz reduces the
rational Hodge conjecture to the primitive middle cohomology in even
dimension \cite[Chapter 7]{Voisin}.  Classical positive results include
the cubic-fourfold case \cite{Zucker} and extensive work on diagonal
Fermat hypersurfaces \cite{Shioda,Ran,Aoki}.  In the Fermat setting, the
large diagonal symmetry decomposes primitive cohomology into character
spaces, and special algebraic cycles are constructed inside those spaces.
On the categorical side,
graded matrix factorizations are related to projective hypersurfaces by the
semi-orthogonal decompositions in \cite[Theorem~3.11]{Orlov}, and the
noncommutative Hodge conjecture is additive for such decompositions by
\cite[Section~3.1, Theorem~3.13 and Remark~3.14]{Lin}.  These results reduce the projective
consequence to a direct proof for the graded matrix-factorization component,
but they do not construct the required classes in that component.

The family considered in this paper is a very general family of sums of
independently varying squarefree binary forms.  The proof uses explicit
matrix factorizations and braid monodromy, rather than the diagonal character
decomposition of a Fermat equation.  To the best of our knowledge,
Theorem~\ref{thm:main-graded} and Corollary~\ref{cor:projective} are new:
they establish the rational
noncommutative Hodge conjecture, and consequently the rational Hodge
conjecture for the associated projective hypersurfaces, in every even
dimension at least four, together with the exact ranks stated above.
Section~9 proves that the general members of this family are not linearly
equivalent to Fermat hypersurfaces; thus the examples are genuinely outside
the classical Fermat cases.  Section~10 records their exact categorical and
Hodge-theoretic invariants.

\subsection*{Acknowledgements}

S.Z. acknowledges support from the start-up fund of Sun Yat-sen University
(grant no.~34000-12256019).

\section{Noncommutative Hodge classes}

\subsection{The graded matrix-factorization category}

Let $S=\C[x_1,\ldots,x_N]$ and let $f\in S$.  A matrix factorization of
$f$ is a finite-rank $\Z/2$-graded free $S$-module $E=E^0\oplus E^1$ with
an odd $S$-linear endomorphism $D$ satisfying
$D^2=f\operatorname{id}_E$.  In block form,
\[
 D=\begin{pmatrix}0&B\\A&0\end{pmatrix},
 \qquad AB=BA=fI.
\]
Assume that $f$ is homogeneous of degree $d$ for the standard grading.  The
scalar action of $\mathbb G_m$ on $\A^N$ satisfies
\[
 f(\lambda x)=\lambda^d f(x),
\]
so $f$ is semi-invariant for the character
$\chi_d(\lambda)=\lambda^d$.  The category considered throughout the paper
is
\[
 \mathcal G_f:=\MF_{\mathbb G_m}(\A^N,\chi_d,f)
 =\MFgr(f)\simeq\Dgrsg(S/(f)).
\]
Its objects are $\mathbb G_m$-equivariant factorizations whose differential
squares to the $\chi_d$-semi-invariant function $f$.  The equivalence with
the graded singularity category is established in
\cite[Theorem~3.10]{Orlov}.

The kernel of $\chi_d$ is $\mu_d$.  Accordingly, the Hochschild homology of
$\mathcal G_f$ has one sector for every $\gamma\in\mu_d$.  The identity
sector is obtained from the local Jacobian state space, whereas every
nonidentity scalar fixes only the origin and contributes a one-dimensional
point sector.  The decomposition will be made explicit in
Section~\ref{subsec:graded-sectors}.

\subsection{Two formulations of noncommutative Hodge classes}

Let $\mathcal C$ be a small $\C$-linear dg category.  We write
$\HN_0(\mathcal C)$, $\HP_0(\mathcal C)$ and $\HH_0(\mathcal C)$ for its
negative cyclic, periodic cyclic and Hochschild homology.  The maps needed
in the rational definition are
\begin{equation}\label{eq:cyclic-maps}
 K_0(\mathcal C)\xrightarrow{\operatorname{ch}_{\HN}}\HN_0(\mathcal C)
 \xrightarrow{j}\HP_0(\mathcal C),
 \qquad
 \pi:\HN_0(\mathcal C)\longrightarrow\HH_0(\mathcal C),
\end{equation}
together with the topological Chern character
\cite[Sections 3--4]{Blanc}
\begin{equation}\label{eq:top-ch}
 \operatorname{ch}^{\mathrm{top}}:
 K_0^{\mathrm{top}}(\mathcal C)_{\Q}\longrightarrow\HP_0(\mathcal C).
\end{equation}
Here $j$ inverts the periodicity variable $u$, whereas $\pi$ sets $u=0$.
The first operation passes from the Hodge filtration to periodic cyclic
homology; the second remembers its degree-zero Hochschild term.

Following \cite[Section~3.1, Definition~3.1]{Lin}, define the space of rational Hodge
classes by
\begin{equation}\label{eq:lin-hodge}
 \Hdg(\mathcal C,\Q):=
 \pi\!\left(j^{-1}\!\left(
 \operatorname{ch}^{\mathrm{top}}
 \bigl(K_0^{\mathrm{top}}(\mathcal C)_{\Q}\bigr)\right)\right)
 \subseteq\HH_0(\mathcal C).
\end{equation}
The inverse image is essential: for a general small dg category, the map
$j$ need not be injective.  The rational noncommutative Hodge conjecture
\cite[Section~3.1, Conjecture~3.2]{Lin} is the surjectivity statement
\begin{equation}\label{eq:lin-conjecture}
 \operatorname{ch}_{\HH}:K_0(\mathcal C)_{\Q}\longrightarrow\HH_0(\mathcal C),
 \qquad
 \operatorname{im}(\operatorname{ch}_{\HH})=\Hdg(\mathcal C,\Q).
\end{equation}
Thus the symbol $\Hdg(\mathcal C,\Q)$ used throughout this paper always
means the subspace of $\HH_0(\mathcal C)$ defined in
\eqref{eq:lin-hodge}, and the unadorned Chern character in our theorem
statements means $\operatorname{ch}_{\HH}$.

For comparison, suppose that $\mathcal C$ is smooth and proper and that the
noncommutative Hodge--de Rham spectral sequence degenerates.  Choose a
splitting
\[
 \mathcal H:\HP_0(\mathcal C)\xrightarrow{\sim}
 \bigoplus_{r\in\Z}\HH_{2r}(\mathcal C)
\]
and let $\operatorname{Pr}_0$ denote projection to $\HH_0(\mathcal C)$.
For a smooth proper dg category, the formula becomes
\begin{equation}\label{eq:smooth-proper-hodge}
 \Hdg(\mathcal C,\Q)=
 \operatorname{Pr}_0\mathcal H\!\left(
 \operatorname{ch}^{\mathrm{top}}
 \bigl(K_0^{\mathrm{top}}(\mathcal C)_{\Q}\bigr)
 \cap j\bigl(\HN_0(\mathcal C)\bigr)\right).
\end{equation}
Although a splitting appears in this display, the resulting conjecture is
independent of that choice and is equivalent to \eqref{eq:lin-conjecture};
see \cite[Section~3.1, Remark~3.8]{Lin}.  After the usual Tate normalization,
$j(\HN_0)$ is the $F^0$ part of periodic cyclic homology.  Consequently,
the intersection in \eqref{eq:smooth-proper-hodge} is the periodic
representative of a rational type-$(0,0)$ class, while
$\operatorname{Pr}_0\mathcal H$ places that class in the target $\HH_0$ of
the algebraic Chern character.

We make the type condition explicit.  Let $V_{\Q}$ be a pure rational
Hodge structure of weight zero.  Thus
\begin{equation}\label{eq:weight-zero-decomposition}
 V_{\C}=V_{\Q}\otimes_{\Q}\C
 =\bigoplus_{p\in\Z}V^{p,-p},
 \qquad
 \overline{V^{p,-p}}=V^{-p,p}.
\end{equation}
Its decreasing Hodge filtration is
\begin{equation}\label{eq:weight-zero-filtration}
 F^aV_{\C}=\bigoplus_{p\ge a}V^{p,-p},
 \qquad
 \overline{F^aV_{\C}}
 =\bigoplus_{p\le-a}V^{p,-p}.
\end{equation}
In particular,
\begin{equation}\label{eq:type-zero-zero}
 V^{0,0}
 =F^0V_{\C}\cap\overline{F^0V_{\C}},
 \qquad
 \Hdg(V_{\Q})=V_{\Q}\cap V^{0,0}.
\end{equation}
For $v\in V_{\Q}$, writing
$v_{\C}=\sum_pv^{p,-p}$, one has
\begin{align}
 v\text{ has type }(0,0)
 &\Longleftrightarrow
 v_{\C}\in F^0V_{\C}\cap\overline{F^0V_{\C}}
 \label{eq:type-zero-equivalences}\\
 &\Longleftrightarrow
 v^{p,-p}=0\qquad(p\ne0).\notag
\end{align}

For $V_{\Q}=K_0^{\mathrm{top}}(\mathcal C)_{\Q}$, put
\[
 \kappa=\operatorname{ch}^{\mathrm{top}}(\xi)
 \in\HP_0(\mathcal C),
 \qquad
 \xi\in K_0^{\mathrm{top}}(\mathcal C)_{\Q}.
\]
The Hodge--de Rham comparison gives
\begin{equation}\label{eq:F0-negative-cyclic}
 F^0\HP_0(\mathcal C)=j\bigl(\HN_0(\mathcal C)\bigr).
\end{equation}
Since $\kappa$ is rational,
\[
 \kappa\in F^0
 \quad\Longrightarrow\quad
 \kappa=\overline\kappa\in\overline{F^0}.
\]
Consequently,
\begin{equation}\label{eq:topological-type-zero}
 \begin{aligned}
 \xi\text{ has type }(0,0)
 &\Longleftrightarrow
 \kappa\in
 \operatorname{ch}^{\mathrm{top}}
 \bigl(K_0^{\mathrm{top}}(\mathcal C)_{\Q}\bigr)
 \cap F^0\HP_0(\mathcal C)\\
 &\Longleftrightarrow
 \kappa\in
 \operatorname{ch}^{\mathrm{top}}
 \bigl(K_0^{\mathrm{top}}(\mathcal C)_{\Q}\bigr)
 \cap j\bigl(\HN_0(\mathcal C)\bigr).
 \end{aligned}
\end{equation}
If $\kappa=j(\widetilde\kappa)$ with
$\widetilde\kappa\in\HN_0(\mathcal C)$, then
\[
 \pi(\widetilde\kappa)\in\HH_0(\mathcal C)
\]
is the degree-zero Hochschild component used in
\eqref{eq:lin-hodge}.

The formulation for admissible subcategories starts one step earlier.  By
\cite[Section~5.1, Proposition~5.4]{Perry}, if
$\mathcal C\subset\Perf(X)$ is admissible and $X$ is smooth and projective,
then $K_0^{\mathrm{top}}(\mathcal C)$ carries a natural weight-zero integral
Hodge structure.  The subgroup
\begin{equation}\label{eq:perry-hodge}
 \Hdg_{\mathrm P}(\mathcal C,\Z)=
 \{v\in K_0^{\mathrm{top}}(\mathcal C):
 v_{\C}\text{ has Hodge type }(0,0)\}
\end{equation}
is defined in \cite[Section~5.2, Definition~5.9]{Perry}.  After
rationalization, the associated surjectivity conjecture
$K_0(\mathcal C)_{\Q}\to\Hdg_{\mathrm P}(\mathcal C,\Q)$ is
\cite[Section~5.2, Conjecture~5.11]{Perry}.
Equivalence with \eqref{eq:lin-conjecture} for every such admissible
category is established in \cite[Section~3.1, Theorem~3.5]{Lin}.  We use the general definition
\eqref{eq:lin-hodge} for $\mathcal G_f=\MFgr(f)$; equations
\eqref{eq:F0-negative-cyclic}--\eqref{eq:topological-type-zero} give its
weight-zero interpretation.

\subsection{The twisted de Rham model}

Assume that $f$ is homogeneous of degree $d$ and has an isolated critical
point.  The equivariant Hochschild--Kostant--Rosenberg comparison identifies
the identity sector of the graded category with the invariant twisted de
Rham complexes
\begin{align}
 \HN^{\Z/2}(\mathcal G_f)_{\mathrm{id}}
 &\simeq\bigl(\Omega_S^\bullet[[u]],u\,d+df\wedge-\bigr)^{\mu_d},
 \label{eq:HN-deRham}\\
 \HP^{\Z/2}(\mathcal G_f)_{\mathrm{id}}
 &\simeq\bigl(\Omega_S^\bullet((u)),u\,d+df\wedge-\bigr)^{\mu_d}.
 \label{eq:HP-deRham}
\end{align}
The twisted de Rham comparison, including its Gauss--Manin connection, is
established in \cite{Efimov}; passage to the identity equivariant sector
follows from \cite[Theorem~2.6.1]{PolishchukVaintrobCohFT}.  Before taking
invariants, the cohomology is concentrated in the parity of $N$, and its
associated graded is computed by the Jacobian state space
\cite[Theorem~6.6]{Dyckerhoff}:
\begin{equation}\label{eq:jac-state}
 \Omega_f=
 \frac{\Omega_S^N}{df\wedge\Omega_S^{N-1}}
 \simeq\Jac(f)\,dx_1\wedge\cdots\wedge dx_N,
 \qquad
 \Jac(f)=S/(\partial_1f,\ldots,\partial_Nf).
\end{equation}
Let $\zeta=e^{2\pi i/d}$.  Scalar multiplication by $\zeta$ induces
\[
 T(g\,dx_1\wedge\cdots\wedge dx_N)
 =\zeta^{\deg g+N}g\,dx_1\wedge\cdots\wedge dx_N.
\]
The identity part of the graded theory is therefore
\begin{equation}\label{eq:scalar-invariants}
 \Omega_f^{T=1}=\bigoplus_{\substack{q\in\Z\\q\equiv0\;({\rm mod}\ d)}}
 (\Omega_f)_q.
\end{equation}

\subsection{The sector decomposition of the graded category}
\label{subsec:graded-sectors}

For $\gamma_s=\zeta^s\in\mu_d$, put
\[
 (\A^N)^{\gamma_s}=\{x:\gamma_sx=x\},
 \qquad f_s=f|_{(\A^N)^{\gamma_s}}.
\]
By \cite[Theorem~2.6.1 and Example~2.6.3]{PolishchukVaintrobCohFT},
\begin{equation}\label{eq:equivariant-sector-formula}
 \HH_*(\mathcal G_f)\simeq
 \bigoplus_{s=0}^{d-1}H(f_s)^{\mu_d},
 \qquad
 H(f_s)=\Jac(f_s)\,dx_{(\A^N)^{\gamma_s}}.
\end{equation}
For $s=0$, the fixed space is $\A^N$ and the corresponding summand is
$\Omega_f^{T=1}$.  If $s\ne0$, then scalar multiplication by
$\zeta^s\ne1$ fixes only the origin, so
\[
 (\A^N)^{\gamma_s}=\{0\},
 \qquad f_s=0,
 \qquad H(f_s)=\C\mathbf1_s.
\]
Each $\mathbf1_s$ has Hochschild degree zero.  Hence
\begin{equation}\label{eq:graded-hh-decomposition}
 \HH_*(\mathcal G_f)\simeq\Omega_f^{T=1}\oplus T_{f,\C},
 \qquad
 T_{f,\C}:=\bigoplus_{s=1}^{d-1}\C\mathbf1_s.
\end{equation}
The first summand retains its integral Hochschild grading.  If
$\omega\in(\Omega_f)_{di}$, then
\begin{equation}\label{eq:graded-hh-degree}
 \deg_{\HH}(\omega)=N-2i.
\end{equation}
In particular, for $N=2m+2$,
\begin{equation}\label{eq:graded-hh-zero}
 \HH_0(\mathcal G_f)=(\Omega_f)_{d(m+1)}\oplus T_{f,\C}.
\end{equation}
The second summand is constant in families and has pure type $(0,0)$.  The
first contains the nontrivial variation treated by braid monodromy in
Sections~6 and~7.

\section{Binary factors and algebraic Chern characters}

Let
\begin{equation}\label{eq:binary-factor}
 F(u,v)=\prod_{r=1}^{d}\ell_r(u,v)
\end{equation}
be squarefree and put $h_r=F/\ell_r$.  The pair
\[
 L_r=(\ell_r,h_r),\qquad
 D_r=\begin{pmatrix}0&h_r\\ \ell_r&0\end{pmatrix}
\]
is a rank-one matrix factorization because $D_r^2=FI$.
Since $\deg\ell_r=1$ and $\deg h_r=d-1$, it has the canonical graded lift
\[
 S(-1)\xrightarrow{\ \ell_r\ }S
 \xrightarrow{\ h_r\ }S(d-1),
\]
and hence defines an object of $\MFgr(F)$.

\begin{lemma}\label{lem:binary-chern}
The Chern characters of $L_1,\ldots,L_d$ span a $(d-1)$-dimensional
subspace $A_F\subset\Omega_F$.  Their only linear relation is
\[
 \sum_{r=1}^{d}\operatorname{ch}(L_r)=0.
\]
On the cover on which the roots are ordered, the resulting integral lattice
is the augmentation lattice
\[
 A_F\simeq
 \left\{(c_1,\ldots,c_d)\in\Z^d:\sum_r c_r=0\right\}.
\]
\end{lemma}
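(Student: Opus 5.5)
Compute each $\operatorname{ch}(L_r)$ explicitly with the Kapustin--Li / Polishchuk--Vaintrob boundary--bulk formula \cite[Section~3]{PolishchukVaintrob}. For a rank-one factorization $D=\begin{pmatrix}0&h\\ \ell&0\end{pmatrix}$ of $F$ in two variables, the formula gives
\[
\operatorname{ch}(L_r)=\pm\operatorname{str}\bigl(\partial_uD\,\partial_vD\bigr)\,du\wedge dv
=\pm\bigl(\partial_u\ell_r\,\partial_vh_r-\partial_v\ell_r\,\partial_uh_r\bigr)\,du\wedge dv,
\]
taken modulo the Jacobian ideal $(\partial_uF,\partial_vF)$. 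Since $\ell_r$ and $h_r$ are homogeneous of degrees $1$ and $d-1$, this Jacobian determinant has degree $d-2$. Together with $du\wedge dv$, the class therefore lies in $(\Omega_F)_{d}$, which is the $T$-invariant piece of Hochschild degree $0$ by \eqref{eq:graded-hh-degree}. By the Macaulay/Poincaré series $(1-t^{d-1})^2/(1-t)^2$ of $\Jac(F)$, the space $\Jac(F)_{d-2}$ has dimension exactly $d-1$. So it suffices to show that the $d$ classes satisfy the single relation and otherwise span.

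\emph{The relation.} Differentiate $F=\ell_rh_r$ to get $\partial F=h_r\,\partial\ell_r+\ell_r\,\partial h_r$. Summing the Jacobians $J(\ell_r,h_r)=\det(\partial\ell_r,\partial h_r)$ over $r$ should give a combination of $\partial_uF,\partial_vF$. The cleanest route is Euler's identity: writing $\ell_r=a_ru+b_rv$, one has
\[
J(\ell_r,h_r)=a_r\,\partial_vh_r-b_r\,\partial_uh_r,
\]
and $\sum_r(a_r\,\partial_v-b_r\,\partial_u)(F/\ell_r)$ can be rewritten using $\partial_vF=\sum_r b_rh_r$ and $\partial_uF=\sum_r a_rh_r$. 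I expect this to land in the Jacobian ideal; if the direct computation is messy, I will instead use that $\bigoplus_rL_r$ is a filtered extension of the object $\operatorname{cone}(F)$. Concretely, the iterated product $\ell_1\cdots\ell_d$ gives a factorization equivalent to $(F,1)$, which is contractible. Additivity of $\operatorname{ch}$ on exact triangles in $\MFgr(F)$ then yields the relation.

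\emph{Spanning.} Evaluate on a convenient basis by localizing. Modulo $\partial_uF,\partial_vF$, the degree-$(d-2)$ part pairs perfectly with degree $d-2$ under the residue pairing, with socle in degree $2d-4$. I will compute the residue pairing $\langle\operatorname{ch}(L_r),\operatorname{ch}(L_s)\rangle$ using the Kapustin--Li / Cardy formula, which equals $\chi(\operatorname{Hom}(L_r,L_s))$. That Euler characteristic is computable from $\operatorname{Hom}$ between $\mathcal O_{\ell_r=0}$ and $\mathcal O_{\ell_s=0}$ stabilized: for $r\ne s$ the lines meet transversally at the origin, and for $r=s$ one gets the self-pairing. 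The resulting Gram matrix should be of the form $aI+bJ$, which has rank exactly $d-1$ with kernel spanned by $(1,\ldots,1)$. This gives spanning and independence of any $d-1$ of the classes at once. I regard determining this Gram matrix, and its normalization in the graded setting where Hom spaces carry the $\mu_d$ invariance, as the main obstacle. A fallback is direct computation at the Fermat-type point $F=u^d-v^d$, where $\ell_r=u-\zeta^rv$ and $J(\ell_r,h_r)$ expands in the monomial basis $u^iv^{d-2-i}$ of $\Jac_{d-2}$ with a Vandermonde coefficient matrix in $\zeta^r$. Lower semicontinuity of rank then extends the conclusion to all squarefree $F$.

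\emph{Integral statement.} On the ordered-roots cover the objects $L_r$ are individually defined, so the lattice is the image of $\Z^d\to A_F$, $e_r\mapsto\operatorname{ch}(L_r)$, whose kernel is $\Z(1,\ldots,1)$. Its image is $\Z^d/\Z(1,\ldots,1)$, which the induced map $c\mapsto\sum_r c_r\operatorname{ch}(L_r)$ identifies, after restricting to the complement, with the augmentation lattice $\{\sum_r c_r=0\}$. This isomorphism is as abstract lattices, equivariant for the permutation (braid) action of the roots, which is what is used later.
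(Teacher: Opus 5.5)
The gap is in the spanning step, which is the real content of the lemma. Your reduction is sound: $q_r=J(\ell_r,h_r)$ has degree $d-2$, and $\dim\Jac(F)_{d-2}=d-1=\dim S_{d-2}$, so the Jacobian ideal vanishes in that degree and you are working with honest binary forms. But spanning is never established.

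\emph{Why neither route closes.} Your primary route only posits that the Gram matrix has the shape $aI+bJ$. Symmetry and the relation give $a+db=0$, but nothing you wrote excludes $a=0$. Your fallback fails as stated. Rank is \emph{lower} semicontinuous, so a rank-$(d-1)$ computation at $u^d-v^d$ yields rank $d-1$ only on a Zariski-open subset of $U_d$. Rank can drop under specialization, which is exactly what must be ruled out, and for $d\ge4$ a general squarefree form is not $\GL_2$-equivalent to $u^d-v^d$.

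\emph{Repairing the pairing route.} Work in the $\Z/2$-graded category, where the boundary--bulk class lives in $\Omega_F$ and the graded point sectors play no role. Use the MCM modules $S/(\ell_r)$ over $R=S/(F)$ with periodic resolution $\cdots\to R\xrightarrow{h_r}R\xrightarrow{\ell_r}R$. For $r\ne s$ one gets $\underline{\operatorname{Hom}}_R=0$ and $\operatorname{Ext}^1_R\cong S/(\ell_r,\ell_s)\cong\C$. For $r=s$ one gets $\underline{\operatorname{End}}_R\cong S/(\ell_r,h_r)$, of dimension $d-1$, and $\operatorname{Ext}^1_R=0$. Hence $\chi(L_r,L_s)=d\delta_{rs}-1$, and $dI-J$ has rank $d-1$ for \emph{every} squarefree $F$. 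HRR then bounds the rank of the span from below.

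\emph{The paper's route.} The paper avoids pairings and evaluates at the roots. In the chart $v=1$ with $\ell_r=u-a_rv$, Euler's identity gives $q_r(a_j,1)=(d-1)F'(a_j)$ for $j=r$ and $-F'(a_j)$ for $j\ne r$. After dividing the $j$th coordinate by $F'(a_j)$, $q_r$ becomes $de_r-\mathbf 1$, with $\mathbf 1=(1,\ldots,1)$. Evaluation at $d$ distinct points is injective on forms of degree $d-2$. This gives the span and the unique relation at once, uniformly in $F$. The two computations agree: up to a nonzero constant, the residue pairing on degree-$(d-2)$ forms is $\sum_j g(a_j)h(a_j)/F'(a_j)^2$, which turns the vectors $de_r-\mathbf 1$ into $d(dI-J)$.

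Two smaller points. First, the relation needs neither the triangle argument nor the hedge ``I expect.'' With $\ell_r=a_ru+b_rv$ one has $\partial_uF=\sum_ra_rh_r$ and $\partial_vF=\sum_rb_rh_r$, so
\[
\sum_rJ(\ell_r,h_r)=\sum_r\bigl(a_r\partial_vh_r-b_r\partial_uh_r\bigr)=\partial_v\partial_uF-\partial_u\partial_vF=0
\]
identically. Second, your integral step overclaims. The lattice $\{c:\sum_rc_r=0\}$ is not a complement of $\Z\mathbf 1$ in $\Z^d$, since their sum has index $d$. For $d\ge3$, the quotient $\Z^d/\Z\mathbf 1$ is not $\mathfrak S_d$-isomorphic to the augmentation lattice: modulo a prime $p\mid d$ the latter has the fixed vector $\mathbf 1$, while the former has no nonzero fixed vectors. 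The lattice spanned by the $\operatorname{ch}(L_r)$ is equivariantly $\Z^d/\Z\mathbf 1$. It agrees with the augmentation lattice abstractly, which is what the ordered-cover statement asserts, and equivariantly after $\otimes\Q$, which is all that is used later.
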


\begin{proof}
The boundary--bulk formula in
\cite[Section~3]{PolishchukVaintrob} gives, up to a common nonzero scalar,
\begin{equation}\label{eq:binary-boundary-bulk}
 \operatorname{ch}(L_r)=q_r(u,v)\,du\wedge dv,\qquad
 q_r=\det
 \begin{pmatrix}
 \partial_u\ell_r&\partial_u h_r\\
 \partial_v\ell_r&\partial_v h_r
 \end{pmatrix}.
\end{equation}
Choose the affine chart $v=1$, write $F(x)=\prod_j(x-a_j)$, and take
$\ell_r=u-a_rv$.  Euler's identity for the homogeneous polynomial $h_r$ of
degree $d-1$ yields
\begin{equation}\label{eq:evaluation-qr}
 q_r(a_j,1)=
 \begin{cases}
 (d-1)F'(a_j),&j=r,\\
 -F'(a_j),&j\ne r.
 \end{cases}
\end{equation}
Indeed, at $a_r$ the determinant in
\eqref{eq:binary-boundary-bulk} equals
$a_r\partial_uh_r+\partial_vh_r=(d-1)h_r=(d-1)F'(a_r)$.  If
$j\ne r$, then $h_r(a_j,1)=0$ and Euler's identity gives
$a_j\partial_uh_r+\partial_vh_r=0$, whence the second line of
\eqref{eq:evaluation-qr}.

After division of the $j$th evaluation coordinate by $F'(a_j)$, the
polynomial $q_r$ maps to
\[
 d\,e_r-(1,\ldots,1)\in\C^d.
\]
These $d$ vectors span the augmentation hyperplane and have only the
relation that their sum is zero.  Evaluation at the $d$ distinct points is
injective on binary forms of degree at most $d-2$.  Since each $q_r$ has
degree $d-2$, the same statement holds before evaluation.  Finally, the
Jacobian ideal of $F$ begins in degree $d-1$, so no additional relation is
introduced in $\Omega_F$.  The integral assertion follows from the
root-labelled construction.
\end{proof}

\begin{remark}\label{rem:finite-AF}
The braid group permutes the root labels of $A_F$; hence the monodromy of
$A_F$ is finite.  This elementary finite-monodromy summand will be shown to
be the entire rational finite-monodromy part of the binary state space.
\end{remark}

\section{Thom--Sebastiani and explicit objects}

For $f_b$ as in \eqref{eq:split-polynomial}, disjointness of the variables
gives
\begin{equation}\label{eq:TS-jac}
 \Jac(f_b)\simeq\bigotimes_{s=0}^{m}\Jac(F_s),
 \qquad
 \Omega_{f_b}\simeq\bigotimes_{s=0}^{m}\Omega_{F_s}.
\end{equation}
The categorical Thom--Sebastiani equivalence is proved in \cite{Preygel}.
The corresponding tensor rules for twisted de Rham complexes,
Gauss--Manin connections, topological realizations, and cyclic Chern
characters follow functorially by applying the relevant monoidal invariants.

For a multi-index $I=(i_0,\ldots,i_m)$, form the exterior tensor product
\begin{equation}\label{eq:tensor-factorization}
 E_I=L_{0i_0}\boxtimes\cdots\boxtimes L_{mi_m}.
\end{equation}
Its signed tensor differential has square $f_b$.  The factors are
homogeneous, so \eqref{eq:tensor-factorization} defines both a two-periodic
factorization and an object of $\mathcal G_b=\MFgr(f_b)$.  Multiplicativity
of the boundary--bulk map \cite[Section~3]{PolishchukVaintrob} gives
\begin{equation}\label{eq:tensor-chern}
 \operatorname{ch}(E_I)=
 \prod_{s=0}^{m}q_{s i_s}\,
 du_0\wedge dv_0\wedge\cdots\wedge du_m\wedge dv_m.
\end{equation}

\begin{proposition}\label{prop:algebraic-lattice}
The Chern characters in \eqref{eq:tensor-chern} span the lattice
\[
 A_b=A_{F_0}\otimes\cdots\otimes A_{F_m}
 \subset(\Omega_{f_b})_{d(m+1)}\subset\HH_0(\mathcal G_b),
 \qquad
 \rk A_b=(d-1)^{m+1}.
\]
In particular,
\[
 A_{b,\Q}\subset
 \operatorname{ch}\bigl(K_0(\mathcal G_b)_{\Q}\bigr)
 \subset\Hdg(\mathcal G_b,\Q).
\]
\end{proposition}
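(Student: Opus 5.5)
The plan is to reduce everything to Lemma~\ref{lem:binary-chern} and the Thom--Sebastiani isomorphism \eqref{eq:TS-jac}.

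\emph{Step 1: the span is the tensor product.} By \eqref{eq:tensor-chern}, $\operatorname{ch}(E_I)$ is the image of $\operatorname{ch}(L_{0i_0})\otimes\cdots\otimes\operatorname{ch}(L_{mi_m})$ under the isomorphism $\bigotimes_s\Omega_{F_s}\simeq\Omega_{f_b}$ of \eqref{eq:TS-jac}. The pure tensors of spanning sets span the tensor product of the spans. Hence the $\operatorname{ch}(E_I)$ span $A_{F_0}\otimes\cdots\otimes A_{F_m}$. Since \eqref{eq:TS-jac} is an isomorphism and each $A_{F_s}$ has dimension $d-1$ by Lemma~\ref{lem:binary-chern}, this subspace has dimension $(d-1)^{m+1}$. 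The integral lattice statement is the tensor product of the augmentation lattices on the root-ordered cover. The only real point is that \eqref{eq:TS-jac} is injective on these classes, and this holds because it is an isomorphism of vector spaces, not merely a map.

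\emph{Step 2: the degree.} Each $q_{si_s}\,du_s\wedge dv_s$ has polynomial degree $d-2$ plus $2$ from the form, so it lies in $(\Omega_{F_s})_d$. The product therefore lies in $(\Omega_{f_b})_{d(m+1)}$. This is scalar-invariant by \eqref{eq:scalar-invariants}, and it has Hochschild degree $N-2(m+1)=0$ by \eqref{eq:graded-hh-degree}. By \eqref{eq:graded-hh-zero} it sits in the identity-sector part of $\HH_0(\mathcal G_b)$. I also need the boundary--bulk Chern character of the graded object $E_I$ to land in the identity sector with exactly this component. I will argue this by compatibility of the equivariant Chern character \cite[Theorem~2.6.1]{PolishchukVaintrobCohFT} with the nonequivariant boundary--bulk map on the $\gamma=1$ summand. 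The point-sector components of $\operatorname{ch}(E_I)$ are a separate issue: I will either show they vanish or note that the identity-sector projection is what is recorded. I expect this step to be the main obstacle, so I will address it first.

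\emph{Step 3: the inclusions.} Each $E_I$ is an object of $\mathcal G_b$, so its class lies in $K_0(\mathcal G_b)$. This gives $A_{b,\Q}\subset\operatorname{ch}(K_0(\mathcal G_b)_{\Q})$, handling point-sector components as in Step 2 (for instance by subtracting suitable residue-field shifts if needed). The second inclusion $\operatorname{ch}(K_0(\mathcal G_b)_{\Q})\subset\Hdg(\mathcal G_b,\Q)$ is the general fact underlying \eqref{eq:lin-hodge}. For an algebraic class $x$, $\operatorname{ch}_{\HN}(x)$ satisfies $j\operatorname{ch}_{\HN}(x)=\operatorname{ch}^{\mathrm{top}}(x^{\mathrm{top}})$ by compatibility of Blanc's topological Chern character with the cyclic one \cite[Sections~3--4]{Blanc}. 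Moreover $\pi\operatorname{ch}_{\HN}(x)=\operatorname{ch}_{\HH}(x)$. Together these place $\operatorname{ch}_{\HH}(x)$ in $\pi(j^{-1}(\operatorname{ch}^{\mathrm{top}}(K_0^{\mathrm{top}})))$.
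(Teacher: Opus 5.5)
Your Steps 1--3 follow the paper's own argument: the rank comes from Lemma~\ref{lem:binary-chern} and the Thom--Sebastiani isomorphism \eqref{eq:TS-jac}; the weight count $(m+1)(d-2)+(2m+2)=d(m+1)$ together with \eqref{eq:graded-hh-zero} places the classes in degree zero; and compatibility of $\operatorname{ch}_{\HH}$, $\operatorname{ch}_{\HN}$ and $\operatorname{ch}^{\mathrm{top}}$ gives the second inclusion. The point-sector issue you flag in Step~2 is real, and the paper's proof does not address it: it treats the identity-sector expression \eqref{eq:tensor-chern} as the whole graded Chern character. As written, your proposal leaves that issue open, so the first inclusion $A_{b,\Q}\subset\operatorname{ch}(K_0(\mathcal G_b)_{\Q})$ is not yet proved. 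Your first option would fail. $E_I$ is the graded Koszul factorization $\{\ell_{0i_0},\ldots,\ell_{mi_m};h_{0i_0},\ldots,h_{mi_m}\}$ on $m+1$ linear forms. The supertrace rule behind \eqref{eq:point-sector-chern} therefore gives, in the same normalization,
\[
 \pr_s\bigl(\operatorname{ch}(E_I)\bigr)=(1-\zeta^{-s})^{m+1}\mathbf 1_s\neq0\qquad(1\le s\le d-1).
\]
Your second option only restates what \eqref{eq:tensor-chern} computes. It says nothing about $\operatorname{ch}(K_0(\mathcal G_b)_{\Q})$.

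What closes the gap is that this point-sector vector, call it $P$, is the same for every $I$. All $L_{\nu r}$ have the same graded lift $S(-1)\to S\to S(d-1)$, so all $E_I$ have the same graded fiber at the origin. The identity components $a_I$, namely the right-hand sides of \eqref{eq:tensor-chern}, satisfy $\sum_{r=1}^{d}a_{I(\nu,r)}=0$ for each $\nu$, where $I(\nu,r)$ is $I$ with $i_\nu$ replaced by $r$. This follows from the polynomial identity $\sum_r q_{\nu r}=0$ established in the proof of Lemma~\ref{lem:binary-chern}. Since $\operatorname{ch}(E_J)=a_J+P$ for every $J$,
\[
 a_I=\operatorname{ch}\Bigl([E_I]-\frac1d\sum_{r=1}^{d}[E_{I(\nu,r)}]\Bigr)
 \in\operatorname{ch}\bigl(K_0(\mathcal G_b)_{\Q}\bigr).
\]
This gives the first inclusion without Lemma~\ref{lem:point-sector-generators}. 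Your residue-field alternative can also be made to work, but it needs that later lemma plus a check that $P$ lies in the rational span of the $\operatorname{ch}(K_b(j))$, not merely in $T_{f_b,\C}$. Finally, the full characters $\operatorname{ch}(E_I)$ span $A_{b,\Q}\oplus\Q P$, because $P=d^{-(m+1)}\sum_I\operatorname{ch}(E_I)$. The first sentence of the proposition must therefore be read as a statement about their identity-sector components.
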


\begin{proof}
The rank follows from Lemma~\ref{lem:binary-chern} and the tensor product.
Each $A_{F_s}$ lies in the scalar-invariant binary sector.  Moreover,
\[
 \deg\!\left(\prod_{\nu=0}^{m}q_{\nu i_\nu}\right)
 =(m+1)(d-2),
 \qquad
 \deg(du_0\wedge dv_0\wedge\cdots\wedge du_m\wedge dv_m)=2m+2.
\]
Thus every class in \eqref{eq:tensor-chern} has total weight
\[
 (m+1)(d-2)+(2m+2)=d(m+1).
\]
Equation~\eqref{eq:graded-hh-zero} therefore places $A_b$ in the
degree-zero identity sector of $\HH_0(\mathcal G_b)$.  Compatibility of the
algebraic, negative-cyclic, and topological Chern characters places every
boundary--bulk class in the Hodge subspace \eqref{eq:lin-hodge}; see
\cite[Section~3.1, Definition~3.1]{Lin}.  This proves the stated inclusions.
\end{proof}

\subsection{Algebraic generators of the point sectors}

Write $N=2m+2$ and enumerate the variables of $f_b$ as
$x_1,\ldots,x_N$.  Euler's identity gives
\[
 f_b=\frac1d\sum_{i=1}^{N}x_i\partial_i f_b.
\]
Hence
\begin{equation}\label{eq:stabilized-residue-field}
 K_b:=\left\{x_1,\ldots,x_N;
 \frac1d\partial_1f_b,\ldots,\frac1d\partial_Nf_b\right\}
\end{equation}
is a graded Koszul matrix factorization.  It is the stabilization of the
residue field $S/(x_1,\ldots,x_N)$; see
\cite[Equation~(2.5)]{PolishchukVaintrobCohFT}.  Let $K_b(j)$ be its grading
shift by the character $\lambda\mapsto\lambda^j$.

\begin{lemma}\label{lem:point-sector-generators}
For $0\le j\le d-2$, put
\[
 \kappa_j:=\operatorname{ch}\bigl(K_b(j+1)\bigr)
 -\operatorname{ch}\bigl(K_b(j)\bigr).
\]
Then
\[
 T_{b,\Q}:=\Span_{\Q}\{\kappa_0,\ldots,\kappa_{d-2}\}
 \subset\HH_0(\mathcal G_b)
\]
has dimension $d-1$ and
\[
 T_{b,\Q}\otimes_{\Q}\C=T_{f_b,\C}
 =\bigoplus_{s=1}^{d-1}\C\mathbf1_s.
\]
In particular,
\[
 T_{b,\Q}\subset
 \operatorname{ch}\bigl(K_0(\mathcal G_b)_{\Q}\bigr)
 \subset\Hdg(\mathcal G_b,\Q).
\]
\end{lemma}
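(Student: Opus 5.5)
The plan is to compute the Hochschild Chern characters of the twists $K_b(j)$ sector by sector, using the equivariant Chern character formula of \cite[Theorem~2.6.1(ii)]{PolishchukVaintrobCohFT} together with \cite[Proposition~4.3.4]{PolishchukVaintrob}. The decomposition \eqref{eq:equivariant-sector-formula} gives
\[
 \operatorname{ch}(K_b(j))=\operatorname{ch}_0(K_b(j))+\sum_{s=1}^{d-1}c_s(j)\,\mathbf 1_s .
\]
The first step concerns the identity sector. Twisting by a character does not change the underlying $\Z/2$-graded factorization, so $\operatorname{ch}_0(K_b(j))$ is independent of $j$. Therefore each difference $\kappa_j$ has zero identity component and lies in $T_{f_b,\C}$.

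The second step concerns the point sectors. In the $\gamma_s$-sector the fixed locus is the origin, so the restricted factorization $K_b|_0$ has zero differential. Its $\gamma_s$-twisted component is the supertrace of $\gamma_s$ acting on the fibre at $0$. The fibre of the Koszul factorization is $\Lambda^\bullet(\C^N)$, and $\zeta^s$ acts on it through the generator degrees. Hence, up to a common normalization,
\[
 c_s(0)=\prod_{i=1}^N(1-\zeta^{-s}),
\]
possibly with a sign or with $\zeta^{s}$ in place of $\zeta^{-s}$. Since $\zeta^s\ne1$, this quantity is nonzero; it is the inverse of the usual $\det(1-\gamma_s)$ factor. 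Twisting by $\lambda\mapsto\lambda^j$ multiplies the $\gamma_s$-trace by $\zeta^{sj}$, so $c_s(j)=\zeta^{sj}c_s(0)$. Consequently
\[
 \kappa_j=\sum_{s=1}^{d-1}(\zeta^s-1)\,\zeta^{sj}c_s(0)\,\mathbf 1_s .
\]

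The third step is linear algebra. Rescale the basis to $\mathbf 1'_s=(\zeta^s-1)c_s(0)\mathbf 1_s$; this is legitimate because every factor is nonzero for $s\ne0$. In the new basis the coefficient matrix of $\kappa_0,\ldots,\kappa_{d-2}$ is $(\zeta^{sj})$ with $1\le s\le d-1$ and $0\le j\le d-2$. This is a Vandermonde matrix in the distinct nodes $\zeta^1,\ldots,\zeta^{d-1}$, so it is invertible. Hence the $\kappa_j$ are $\C$-linearly independent and span $T_{f_b,\C}$. In particular they are $\Q$-linearly independent, which gives $\dim_\Q T_{b,\Q}=d-1$ and $T_{b,\Q}\otimes_\Q\C\to T_{f_b,\C}$ an isomorphism. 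Each $\kappa_j$ is the Chern character of $[K_b(j+1)]-[K_b(j)]\in K_0(\mathcal G_b)$, so $T_{b,\Q}\subset\operatorname{ch}(K_0(\mathcal G_b)_\Q)$. The last inclusion into $\Hdg$ follows from the compatibility of Chern characters, exactly as in Proposition~\ref{prop:algebraic-lattice}.

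The main obstacle is the second step: pinning down the precise sector-wise Chern character formula of \cite{PolishchukVaintrobCohFT}, in particular checking that the grading shift acts on the $\gamma_s$-component by exactly the character value $\zeta^{\pm sj}$ and that the residue-field trace is nonzero. Only nonvanishing and the $j$-dependence are needed, since sign and normalization conventions are absorbed in the rescaling. I would verify the formula first in the simplest case $N=1$, $f=x^d$, where the sectors and $K(j)$ are fully explicit.
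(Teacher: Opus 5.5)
Your proposal is correct and follows the paper's proof essentially step for step. Grading shifts act trivially on the identity sector, so each $\kappa_j$ lies in the point sectors. The $\gamma_s$-component is the supertrace on the Koszul fibre $\Lambda^\bullet(\C^N)$ at the origin, which gives $c_s\zeta^{sj}$ with $c_s=(1-\zeta^{-s})^N\ne0$ up to sign. The Vandermonde matrix $(\zeta^{sj})_{1\le s\le d-1,\,0\le j\le d-2}$, rescaled by the nonzero factors $c_s(\zeta^s-1)$, then finishes the argument.

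The only slip is your parenthetical remark. The supertrace $\sum_k(-1)^k\operatorname{tr}(\gamma_s\mid\Lambda^k\C^N)$ is $\det(1-\gamma_s^{\mp1})$ itself, not the inverse of such a factor. Since you only use nonvanishing, this does not affect the proof.
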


\begin{proof}
The equivariant Chern-character formula is given in
\cite[Theorem~2.6.1(ii)]{PolishchukVaintrobCohFT}; its value for the
stabilized residue field is computed in
\cite[Proposition~4.3.4]{PolishchukVaintrob}.  It computes the component at
$\gamma_s=\zeta^s\ne1$ by the supertrace on the
fiber at the fixed point $0$.  The underlying fiber of $K_b$ is the exterior
algebra of the scalar representation $\C^N$.  Therefore
\begin{equation}\label{eq:point-sector-chern}
 \pr_s\!\left(\operatorname{ch}\bigl(K_b(j)\bigr)\right)
 =c_s\zeta^{sj}\mathbf1_s,
 \qquad
 c_s=\det(1-\zeta^{-s}I_N)=(1-\zeta^{-s})^N\ne0,
\end{equation}
up to a common sign depending only on the convention for the Koszul
differential.  On the identity sector, a grading shift acts by
$\gamma_0^j=1$, so subtraction cancels that component.  Hence
\[
 \pr_s(\kappa_j)=c_s\zeta^{sj}(\zeta^s-1)\mathbf1_s
 \qquad(1\le s\le d-1).
\]
The matrix
\[
 \bigl(\zeta^{sj}\bigr)_{1\le s\le d-1,\;0\le j\le d-2}
\]
is a Vandermonde matrix, with determinant
\[
 \prod_{1\le r<s\le d-1}(\zeta^s-\zeta^r)\ne0.
\]
Multiplying its rows by the nonzero numbers $c_s(\zeta^s-1)$ preserves its
rank.  Thus the $\kappa_j$ form a basis of the point-sector sum after
complexification.  Since every $\kappa_j$ is a difference of algebraic
Chern characters, its rational span is contained in both the Chern image
and the Hodge space \eqref{eq:lin-hodge}.
\end{proof}

\section{Topological K-theory and the Hodge filtration}

\subsection{The rational topological K-group}

We first compute the periodic cyclic dimension from the graded sector
formula and then identify the rational topological rank.

The Jacobian algebra of a squarefree binary form has Hilbert series
$(1+t+\cdots+t^{d-2})^2$ and Milnor number $(d-1)^2$.  By
\eqref{eq:TS-jac},
\begin{equation}\label{eq:milnor-number}
 \mu(f_b)=(d-1)^{2m+2}.
\end{equation}
Let $\zeta=e^{2\pi i/d}$.  A monomial basis of $\Omega_{f_b}$ may be
indexed by tuples $(a_1,\ldots,a_{2m+2})$ with $1\le a_i\le d-1$; scalar
monodromy acts by $\zeta^{a_1+\cdots+a_{2m+2}}$.  Hence, for
$r\not\equiv0\pmod d$, its trace equals
\[
 \left(\sum_{a=1}^{d-1}\zeta^{ra}\right)^{2m+2}
 =(-1)^{2m+2}=1.
\]
The averaging idempotent $d^{-1}\sum_{r=0}^{d-1}T^r$ projects onto the
invariants.  Therefore
\begin{equation}\label{eq:invariant-rank}
 \dim_{\C}\Omega_{f_b}^{T=1}
 =\frac1d\bigl((d-1)^{2m+2}+d-1\bigr).
\end{equation}
By \eqref{eq:graded-hh-decomposition}, the $d-1$ point sectors add $d-1$
dimensions to \eqref{eq:invariant-rank}.  Moreover,
\eqref{eq:graded-hh-degree} is even because $N=2m+2$ is even, and every
point sector has Hochschild degree zero.  Hence
\begin{align}
 \dim_{\C}\HP_0(\mathcal G_b)
 &=\frac{(d-1)^{2m+2}+d-1}{d}+d-1,
 \label{eq:graded-hp-zero}\\
 \HP_1(\mathcal G_b)&=0.
 \label{eq:graded-hp-one}
\end{align}
The Hodge--de Rham spectral sequence degenerates for smooth proper dg
categories \cite{Kaledin}.  To identify the rational lattice, use the
semi-orthogonal decompositions recalled in Section~8.  By
\cite[Proposition~4.32]{Blanc}, the topological Chern character is an
isomorphism after tensoring with $\C$ for $\Perf(X_b)$.  Additivity of
topological
$K$-theory and periodic cyclic homology under semi-orthogonal
decompositions
\cite[Section~3.2, Theorem~3.20 and Example~3.21]{Lin} gives the same statement
for $\mathcal G_b$, because the complementary components are exceptional.
Equation~\eqref{eq:graded-hp-zero} therefore gives
\eqref{eq:main-top-rank}.

\subsection{The complete Hodge-number calculation}

The Hochschild grading in
\cite[Example~2.6.3]{PolishchukVaintrobCohFT}, applied to the identity
sector of \eqref{eq:equivariant-sector-formula}, gives the coefficient
formula
\begin{equation}\label{eq:hodge-coefficient}
 h_{\mathrm{nc}}^{2m-p,p}
 =[t^{(p+1)d-(2m+2)}](1+t+\cdots+t^{d-2})^{2m+2},
 \qquad0\le p\le2m.
\end{equation}
To make the computation explicit, set $N=2m+2$ and
$k=(p+1)d-N$.  Since
\[
 (1+t+\cdots+t^{d-2})^N=(1-t^{d-1})^N(1-t)^{-N},
\]
the coefficient in \eqref{eq:hodge-coefficient} is
\begin{equation}\label{eq:finite-binomial}
 \sum_{j\ge0}(-1)^j
 \binom Nj
 \binom{k-j(d-1)+N-1}{N-1},
\end{equation}
with a binomial coefficient interpreted as zero when its upper entry is
less than $N-1$.  Summing \eqref{eq:hodge-coefficient} over $p$ is exactly
the roots-of-unity projection \eqref{eq:invariant-rank}.  The $d-1$ point
sectors occur only in Hochschild degree zero and therefore add $d-1$ to the
central type-$(0,0)$ dimension of the graded category.

\section{Binary monodromy}

The purpose of this section is to isolate the part of each binary state
space that can contribute rational Hodge classes to the identity sector of
the graded category.  The explicit factorizations of Section~3 span
$A_F\otimes\C=W_0$, but $\Omega_F$ also has binary cyclotomic eigenspaces
$W_r$ for $r\ne0$.  These eigenspaces are distinct from the $d-1$ point
sectors in \eqref{eq:graded-hh-decomposition}: tensor products of the
$W_r$ occur inside the global identity sector when their characters
multiply to one.  The argument below shows that every rational combination
involving a nonzero binary character has infinite braid monodromy.  Since
the very-general Hodge-locus argument of Section~7 forces rational
type-$(0,0)$ classes in the varying identity sector to lie in a
finite-monodromy sub-local system, this calculation excludes precisely the
unwanted tensor summands.  The point sectors are constant and were treated
algebraically in Lemma~\ref{lem:point-sector-generators}.

Three commuting operations enter the calculation and should not be
confused.  First, a fixed Milnor fiber has an order-$d$ scalar automorphism.
Second, moving the roots of $F$ gives Gauss--Manin, or braid, monodromy.
Third, the scalar automorphism becomes the deck transformation of a cyclic
cover.  The scalar action produces the monodromy decomposition, while the
interaction of the other two actions produces the reduced Burau
representation.

\subsection{Cyclotomic sectors}

Let $F\in U_d$ be a squarefree binary form and put
\[
 M_F=\{F=1\}\subset\C^2,
 \qquad \Omega_F=\Jac(F)\,du\wedge dv.
\]
The standard residue identification of the Jacobian state space with
vanishing cohomology is compatible with Gauss--Manin transport and with the
mixed Hodge structure \cite{Steenbrink}.  We therefore pass freely between
$\Omega_F$ and $H^1(M_F,\C)$.

\subsubsection*{Step 1: why there is a monodromy decomposition}

Set $\zeta=e^{2\pi i/d}$.  Homogeneity gives
$F(\zeta u,\zeta v)=F(u,v)$, and hence
\[
 \tau:M_F\longrightarrow M_F,
 \qquad \tau(u,v)=(\zeta u,\zeta v),
 \qquad \tau^d=\id.
\]
The same formula is defined for every $F\in U_d$.  Consequently $\tau$
commutes with parallel transport in the family of Milnor fibers.

Because $x^d-1$ has no repeated root over $\C$, every linear operator whose
$d$th power is the identity is semisimple.  The spectral projectors of
$\tau$ are therefore
\[
 e_r=\frac1d\sum_{j=0}^{d-1}\zeta^{-rj}\tau^j,
 \qquad W_r=e_r\Omega_F=\ker(\tau-\zeta^r),
 \qquad 0\le r\le d-1.
\]
The elementary character identities
$e_re_s=\delta_{rs}e_r$ and $\sum_re_r=1$ give the required decomposition
\begin{equation}\label{eq:sector-decomposition}
 \Omega_F=W_0\oplus W_1\oplus\cdots\oplus W_{d-1}.
\end{equation}
Since $\tau$ commutes with Gauss--Manin transport, every $W_r$ is a complex
sub-local system.  Notice, however, that an individual $W_r$ is generally
defined only over $\Q(\zeta)$; a rational sub-local system contains all
sectors in the Galois orbit of any character that it contains.

\subsubsection*{Step 2: dimensions of the sectors}

The two partial derivatives of a squarefree binary form form a regular
sequence of degree $d-1$.  Hence
\[
 \operatorname{Hilb}_{\Jac(F)}(q)
 =\frac{(1-q^{d-1})^2}{(1-q)^2}
 =(1+q+\cdots+q^{d-2})^2.
\]
If $h_k=\dim_\C\Jac(F)_k$, then
\[
 h_k=\begin{cases}
 k+1,&0\le k\le d-2,\\
 2d-3-k,&d-1\le k\le2d-4.
 \end{cases}
\]
For a homogeneous $g\in\Jac(F)_k$ one has
\[
 \tau^*(g\,du\wedge dv)=\zeta^{k+2}g\,du\wedge dv.
\]
Thus $W_r$ is the sum of the graded pieces for which
$k+2\equiv r\pmod d$.  For $r=0$ only $k=d-2$ occurs.  For
$2\le r\le d-2$, the two degrees $r-2$ and $d+r-2$ contribute
$r-1$ and $d-r-1$ dimensions; the endpoint cases are obtained by omitting
the degree outside the range.  Therefore
\begin{equation}\label{eq:sector-dimensions}
 \dim W_0=d-1,
 \qquad \dim W_r=d-2\quad(1\le r\le d-1).
\end{equation}
Equivalently, the coefficients of the Hilbert series are recorded by pairs
$(a,b)$ with $1\le a,b\le d-1$ and $a+b\equiv r\pmod d$.  For a general
binary form these pairs are a character count, not a preferred monomial
basis of the Jacobian algebra.

\subsubsection*{Step 2a: identification of the identity sector}

We spell out the passage from Lemma~\ref{lem:binary-chern} to the identity
sector.  By \eqref{eq:binary-boundary-bulk},
\[
 \operatorname{ch}(L_j)=q_j(u,v)\,du\wedge dv,
 \qquad \deg q_j=d-2.
\]
Since
\[
 \tau^*u=\zeta u,\qquad \tau^*v=\zeta v,
 \qquad \tau^*(du\wedge dv)=\zeta^2du\wedge dv,
\]
homogeneity of $q_j$ gives
\begin{align}
 \tau^*\operatorname{ch}(L_j)
 &=q_j(\zeta u,\zeta v)\,d(\zeta u)\wedge d(\zeta v)\notag\\
 &=\zeta^{d-2}q_j(u,v)\,\zeta^2du\wedge dv\notag\\
 &=\zeta^d\operatorname{ch}(L_j)
 =\operatorname{ch}(L_j).
 \label{eq:binary-chern-invariant}
\end{align}
Therefore every generator of $A_F$ belongs to the eigenvalue-one sector:
\begin{equation}\label{eq:AF-in-W0}
 A_F\otimes_\Z\C
 =\Span_\C\{\operatorname{ch}(L_1),\ldots,
                    \operatorname{ch}(L_d)\}
 \subseteq\ker(\tau^*-1)=W_0.
\end{equation}
Lemma~\ref{lem:binary-chern} gives exactly one relation among these $d$
generators,
\[
 \sum_{j=1}^d\operatorname{ch}(L_j)=0,
 \qquad
 \dim_\C(A_F\otimes_\Z\C)=d-1.
\]
On the other hand, the congruence $k+2\equiv0\pmod d$ with
$0\le k\le2d-4$ has the unique solution $k=d-2$.  Hence
\[
 W_0=\Jac(F)_{d-2}\,du\wedge dv,
 \qquad
 \dim_\C W_0=h_{d-2}=d-1.
\]
Combining this dimension equality with the inclusion
\eqref{eq:AF-in-W0} yields
\begin{equation}\label{eq:W0-equals-AF}
 \boxed{\;W_0=A_F\otimes_\Z\C.\;}
\end{equation}
Thus the identity sector is generated by the explicit algebraic Chern
characters of Lemma~\ref{lem:binary-chern}; its braid action is finite by
Remark~\ref{rem:finite-AF}.

\subsection{Configuration space and braid monodromy}

\subsubsection*{Step 3: why the braid group acts}

Work on the affine chart on which no root lies at infinity and fix the
leading coefficient.  Then
\[
 F(u,v)=c\prod_{j=1}^d(u-z_jv),
 \qquad z_i\ne z_j.
\]
The unordered root set is a point of
$\Conf_d(\C)/\mathfrak S_d$.  Its fundamental group is the Artin braid
group
\[
 B_d=\left\langle\sigma_1,\ldots,\sigma_{d-1}\ \middle|\
 \begin{aligned}
 \sigma_i\sigma_j&=\sigma_j\sigma_i&&(|i-j|\ge2),\\
 \sigma_i\sigma_{i+1}\sigma_i&=\sigma_{i+1}\sigma_i\sigma_{i+1}
 \end{aligned}\right\rangle;
\]
see \cite[\S 1]{FunarKohno} and
\cite[\S 1.1]{Venkataramana}.  Geometrically, $\sigma_i$ is a positive half-twist
interchanging the adjacent roots $z_i$ and $z_{i+1}$.  With labeled roots
one obtains the pure braid group instead.

A loop in this configuration space gives a locally trivial family of Milnor
fibers.  Parallel transport yields the Gauss--Manin representation
\[
 \rho_F:B_d\longrightarrow\GL(H^1(M_F,\C))
 \simeq\GL(\Omega_F).
\]
This explains the braid action: it is not an additional symmetry chosen on
a single Jacobian algebra, but the holonomy of the state-space local system
as the roots move.  Since the scalar automorphism $\tau$ is defined over the
whole family,
\[
 \rho_F(\beta)\tau=\tau\rho_F(\beta)
 \qquad(\beta\in B_d),
\]
and each sector $W_r$ is braid-stable.

\subsubsection*{Step 4: the cyclic-cover model}

On $v\ne0$, put $x=u/v$ and $y=v^{-1}$.  The equation $F=1$ becomes
\[
 y^d=c\prod_{j=1}^d(x-z_j).
\]
Its smooth projective compactification $C_{\mathbf z}$ is a degree-$d$
cyclic cover of $\PP^1_x$, totally ramified over the $d$ roots.  The
Riemann--Hurwitz formula gives
\[
 g(C_{\mathbf z})=\frac{(d-1)(d-2)}2.
\]
The deck transformation $y\mapsto\zeta y$ corresponds to $\tau^{-1}$.
The Milnor fiber is obtained by deleting the ramification points over the
roots.  The resulting boundary cohomology is the $(d-1)$-dimensional
augmentation representation and belongs to the eigenvalue-$1$ sector.
For $r\ne0$, the sector $W_r$ is therefore the $\zeta^r$-eigenspace of
$H^1(C_{\mathbf z},\C)$, up to replacing $r$ by $d-r$.  This cyclic-cover
realization, including its compatibility with braid monodromy, is established
in \cite[\S\S 7.3--7.7]{Venkataramana}; see also \cite{FunarKohno}.

\subsection{The reduced Burau representation}

\subsubsection*{Step 5: definition from the infinite cyclic cover}

Let $D_{\mathbf z}=\C\setminus\{z_1,\ldots,z_d\}$ and map every positive
meridian around a puncture to $1\in\Z$.  The corresponding infinite cyclic
cover $\widetilde D_{\mathbf z}\to D_{\mathbf z}$ has deck generator $t$.
Its first homology is a free module of rank $d-1$ over
\[
 \Lambda=\Z[t,t^{-1}].
\]
Every braid lifts to this cover and acts $\Lambda$-linearly.  This action is
the reduced Burau representation
\[
 \overline\beta_d:B_d\longrightarrow\GL_{d-1}(\Lambda);
\]
see \cite[\S 1.1]{Venkataramana} and
\cite[\S 2]{FunarKohno}.  This construction also explains the parameter $t$:
it records the deck action, so evaluating $t$ at a root of unity extracts a
deck-eigenspace of the finite cyclic cover.

In the vanishing-chain basis $e_1,\ldots,e_{d-1}$, the convention of
\cite[\S 1.1]{Venkataramana} gives
\[
 \begin{aligned}
 e_i&\longmapsto-te_i,\\
 e_{i-1}&\longmapsto e_{i-1}+te_i&&(i>1),\\
 e_{i+1}&\longmapsto e_{i+1}+e_i&&(i<d-1),\\
 e_j&\longmapsto e_j&&(|j-i|\ge2).
 \end{aligned}
\]
Thus, for an interior index, the only nonidentity block on
$(e_{i-1},e_i,e_{i+1})$ is
\[
 \begin{pmatrix}1&0&0\\ t&-t&1\\0&0&1\end{pmatrix}.
\]
Adjacent block multiplication gives the braid relation, while disjoint
blocks commute.  Appendix~A records the corresponding matrix calculation.

\subsubsection*{Step 6: specialization and the reduced quotient}

Fix $r\ne0$ and put $t_r=\zeta^r$.  The specialized
$(d-1)$-dimensional Burau module contains the vector
\[
 v_{t_r}=\sum_{j=1}^{d-1}(1+t_r+\cdots+t_r^{j-1})e_j.
\]
Substitution in the preceding formulas shows that every $\sigma_i$ fixes
$v_{t_r}$.  Topologically, this is the boundary class killed when the cyclic
cover is compactified.  Consequently
\[
 E_{t_r}=\left(
 \overline\beta_d\otimes_{\Lambda,t\mapsto t_r}\C
 \right)\big/\C v_{t_r},
 \qquad \dim E_{t_r}=d-2,
\]
and the cyclic-cover construction gives a braid-equivariant isomorphism
\[
 W_r\simeq E_{\zeta^r},
\]
possibly after the harmless exchange $r\leftrightarrow d-r$.  This is the
precise meaning of the statement that the braid action on a nonidentity
sector is the reduced Burau representation ``after quotienting the invariant
line.''

\subsection{The invariant Hermitian form, signature, and irreducibility}

\subsubsection*{The invariant Hermitian form and its signature}

Let $V_t=\C^{d-1}$ be the specialized reduced Burau module, and let
$B_i(t)$ denote the matrix of $\sigma_i$.  A Hermitian form is a map
\[
 h_t:V_t\times V_t\longrightarrow\C,\qquad
 h_t(ax,by)=a\overline b\,h_t(x,y),\qquad
 h_t(y,x)=\overline{h_t(x,y)}.
\]
Its Gram matrix $H_t=(h_t(e_j,e_k))$ is
\begin{equation}\label{eq:squier-gram}
 H_t=
 \begin{pmatrix}
  2+t+t^{-1}&-(1+t)&&0\\
  -(1+t^{-1})&2+t+t^{-1}&\ddots&\\
  &\ddots&\ddots&-(1+t)\\
  0&&-(1+t^{-1})&2+t+t^{-1}
 \end{pmatrix}.
\end{equation}
In the normalization of \cite[\S\S 1.1.3, 4.1]{Venkataramana}, the
invariance theorem \cite{Squier} is
\begin{equation}\label{eq:squier-invariance}
 B_i(t)^*H_tB_i(t)=H_t,\qquad
 B_i(t)^*=\overline{B_i(t)}^{\mathsf T}.
\end{equation}
Since $B_i(t)-I$ is supported on the indices $i-1,i,i+1$,
\eqref{eq:squier-invariance} reduces to the corresponding $3\times3$
matrix identity.

Put
\[
 t=e^{2i\theta},\qquad c=\cos\theta,\qquad
 f_j=e^{ij\theta}e_j.
\]
Then
\[
 2+t+t^{-1}=4c^2,\qquad
 -(1+t)=-2ce^{i\theta},\qquad
 -(1+t^{-1})=-2ce^{-i\theta},
\]
and therefore
\begin{equation}\label{eq:squier-real-matrix}
 [h_t]_{(f_j)}=T_c=
 \begin{pmatrix}
  4c^2&-2c&&0\\
  -2c&4c^2&\ddots&\\
  &\ddots&\ddots&-2c\\
  0&&-2c&4c^2
 \end{pmatrix}.
\end{equation}
For
\[
 s_k=\left(\sin\frac{\pi kj}{d}\right)_{j=1}^{d-1},
 \qquad1\le k\le d-1,
\]
the identity
\[
 \sin\frac{\pi k(j-1)}d+\sin\frac{\pi k(j+1)}d
 =2\cos\frac{\pi k}d\sin\frac{\pi kj}d
\]
gives
\begin{equation}\label{eq:squier-eigenvalues}
 T_cs_k=\lambda_ks_k,\qquad
 \lambda_k=4c\left(c-\cos\frac{\pi k}{d}\right).
\end{equation}

Let $t=t_r=\zeta^r$ and $\theta=\pi r/d$.  In the $f$-basis,
\[
 v_{t_r}
 =\sum_{j=1}^{d-1}\frac{1-e^{2ij\theta}}{1-e^{2i\theta}}e_j
 =\frac{e^{-i\theta}}{\sin\theta}
   \sum_{j=1}^{d-1}\sin(j\theta)f_j.
\]
Thus $v_{t_r}$ is proportional to $s_r$, and
\[
 \ker H_{t_r}=\C v_{t_r}\qquad(r\ne d/2).
\]
The radical is
\[
 \operatorname{rad}(h_{t_r})
 =\{x\in V_{t_r}:h_{t_r}(x,y)=0\text{ for all }y\in V_{t_r}\}.
\]
Hence $h_{t_r}$ induces a nondegenerate Hermitian form
$\overline h_r$ on
\[
 E_{t_r}=V_{t_r}/\C v_{t_r}.
\]
Since $k\mapsto\cos(\pi k/d)$ is strictly decreasing,
\begin{equation}\label{eq:squier-signature}
 \operatorname{sign}(\overline h_r)=
 \begin{cases}
  (d-r-1,r-1),&1\le r<d/2,\\
  (r-1,d-r-1),&d/2<r\le d-1,
 \end{cases}
\end{equation}
where the entries count positive and negative eigenvalues.  The same two
integers, up to order, are
identified with the Hodge numbers of the cyclic-cover eigenspace by the
mixed-Hodge description in \cite{Steenbrink}.

Suppose $d$ is even and $r=d/2$, so $t=-1$ and $H_{-1}=0$.  Define
\begin{equation}\label{eq:minus-one-alternating}
 \omega(e_j,e_{j+1})=-1,\qquad
 \omega(e_{j+1},e_j)=1,\qquad
 \omega(e_j,e_k)=0\quad(|j-k|\ne1).
\end{equation}
Then
\[
 \operatorname{rad}(\omega)
 =\C(e_1+e_3+\cdots+e_{d-1})=\C v_{-1},
\]
and $\omega$ induces a nondegenerate alternating form
$\overline\omega$ on $E_{-1}$.  The specialized Burau operators are
\begin{equation}\label{eq:PL-transvection}
 \sigma_i(x)=x+\overline\omega(x,\delta_i)\delta_i
 =(I+N_i)x,\qquad N_i^2=0,\qquad N_i\ne0,
\end{equation}
where $\delta_i$ is the image of $e_i$ in $E_{-1}$.  Such an operator is a
Picard--Lefschetz transvection.  Moreover,
\[
 \sigma_i^q=(I+N_i)^q=I+qN_i\ne I
 \qquad(q\in\Z\setminus\{0\}),
\]
so it has infinite order.

\subsubsection*{Irreducibility of the specialized quotient}

Write $\delta_i$ for the image of $e_i$ in $E_t$.  For $t\ne-1$,
\begin{equation}\label{eq:complex-reflection}
 \sigma_i(x)
 =x-(1+t)
   \frac{\overline h_t(x,\delta_i)}
        {\overline h_t(\delta_i,\delta_i)}\delta_i,
 \qquad
 \overline h_t(\delta_i,\delta_i)\ne0.
\end{equation}
An operator
\[
 R(x)=x+\ell(x)\delta,\qquad
 \operatorname{rank}(R-I)=1,\qquad R\delta\ne\delta,
\]
is a complex reflection; $\C\delta$ is its reflecting direction and
$\ker\ell$ is its fixed hyperplane.  Formula
\eqref{eq:complex-reflection} exhibits $\sigma_i$ as such a reflection.
By \eqref{eq:squier-gram},
\begin{equation}\label{eq:vanishing-chain-pairing}
 \overline h_t(\delta_i,\delta_j)=0\quad(|i-j|\ge2),\qquad
 \overline h_t(\delta_i,\delta_{i+1})=-(1+t)\ne0.
\end{equation}
The images $\delta_1,\ldots,\delta_{d-1}$ span $E_t$.

Let
\[
 0\ne L\subseteq E_t,\qquad \sigma_i(L)=L
 \quad(1\le i\le d-1),
\]
and choose $0\ne x\in L$.  If
\[
 \overline h_t(x,\delta_i)=0\qquad(1\le i\le d-1),
\]
then $x$ is orthogonal to a spanning set.  Nondegeneracy of
$\overline h_t$ would give $x=0$.  Hence there is an $i$ such that
\[
 0\ne\sigma_i(x)-x
 =-(1+t)
   \frac{\overline h_t(x,\delta_i)}
        {\overline h_t(\delta_i,\delta_i)}\delta_i\in L,
\]
and therefore $\delta_i\in L$.  From
\eqref{eq:vanishing-chain-pairing},
\[
 \begin{aligned}
 i<d-1&\Longrightarrow
 0\ne\sigma_{i+1}(\delta_i)-\delta_i
 \in\C^*\delta_{i+1}\cap L,\\
 i>1&\Longrightarrow
 0\ne\sigma_{i-1}(\delta_i)-\delta_i
 \in\C^*\delta_{i-1}\cap L.
 \end{aligned}
\]
Induction in both directions yields
\[
 \delta_1,\ldots,\delta_{d-1}\in L,\qquad
 L=\Span_\C\{\delta_1,\ldots,\delta_{d-1}\}=E_t.
\]

For $t=-1$, use \eqref{eq:PL-transvection} and the nondegenerate form
$\overline\omega$.  Since
\[
 \overline\omega(\delta_i,\delta_{i+1})=-1,\qquad
 \overline\omega(\delta_i,\delta_j)=0\quad(|i-j|\ge2),
\]
the same argument gives
\[
 0\ne L\subseteq E_{-1},\quad
 \sigma_i(L)=L\ \forall i
 \quad\Longrightarrow\quad L=E_{-1}.
\]
Therefore
\[
 W_r\simeq E_{\zeta^r}\quad\Longrightarrow\quad
 W_r\text{ is an irreducible }B_d\text{-module}
 \qquad(1\le r\le d-1);
\]
compare \cite[Proposition 16]{Venkataramana}.

\subsection{The rational finite-monodromy subspace}

We can now combine the decomposition, the Galois action, and the preceding
signature calculation.

\begin{lemma}\label{lem:orbit}
Let $d\ge7$, let $\zeta=e^{2\pi i/d}$, and let $1\le a\le d-1$.  Define
\[
 \mathcal O(a)=
 \left\{b\in\{1,\ldots,d-1\}:\zeta^b=\sigma(\zeta^a)
 \text{ for some }\sigma\in
 \operatorname{Gal}\bigl(\Q(\zeta^a)/\Q\bigr)\right\}.
\]
Then
\[
 \mathcal O(a)\cap\{2,\ldots,d-2\}\ne\varnothing.
\]
\end{lemma}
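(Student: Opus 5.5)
Set $g=\gcd(a,d)$ and $n=d/g$, so that $\zeta^a$ is a primitive $n$th root of unity and $n\ge2$. The Galois group of $\Q(\zeta^a)/\Q$ acts on $\zeta^a$ by $\zeta^a\mapsto\zeta^{ak}$ with $k\in(\Z/n)^\times$. Its orbit therefore consists of all primitive $n$th roots of unity, and
\[
 \mathcal O(a)=\{b\in\{1,\ldots,d-1\}:\gcd(b,d)=g\}=\{gk:1\le k\le n-1,\ \gcd(k,n)=1\}.
\]
This reduces the statement to an elementary question: does the set $\{gk:\gcd(k,n)=1,\ 1\le k<n\}$ meet $\{2,\ldots,d-2\}$? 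The elements of $\mathcal O(a)$ lie in $[g,d-g]$, so they can fail to lie in $\{2,\ldots,d-2\}$ only by equalling $1$ or $d-1$. The argument splits into cases according to $g$ and $n$.

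\emph{Case $g\ge2$.} Every element $b=gk$ satisfies $2\le g\le b\le d-g\le d-2$. Since $\mathcal O(a)$ is nonempty (it contains $a$), the intersection is nonempty.

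\emph{Case $g=1$.} Here $\mathcal O(a)=(\Z/d)^\times$ realized in $\{1,\ldots,d-1\}$. The only elements outside $\{2,\ldots,d-2\}$ are $1$ and $d-1$, so it suffices to show $\varphi(d)>2$. The totient satisfies $\varphi(d)\le2$ exactly when $d\in\{1,2,3,4,6\}$. Since $d\ge7$, we get $\varphi(d)\ge4$, and $\varphi(d)$ is even for $d\ge3$. So $\mathcal O(a)$ contains some unit $b\notin\{1,d-1\}$, and that $b$ lies in $\{2,\ldots,d-2\}$. Concretely, one can also exhibit such a unit directly. If $d$ is odd, take $b=2$. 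If $d$ is even, then $d\ge8$; take $b=d/2-1$ when $d\equiv0\pmod4$ (it is odd and $\gcd(d/2-1,d)\mid 2$), and $b=d/2-2$ when $d\equiv2\pmod4$ (then $d/2$ is odd, so $d/2-2$ is odd). In each case $2\le b\le d-2$.

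The only point needing care is the exceptional list for $\varphi(d)\le2$. This is exactly where the hypothesis $d\ge7$ enters: for $d=6$ and $a=1$, the orbit is $\{1,5\}$ and the lemma fails. I would cite the standard classification, or use the explicit choices of $b$ above, which avoid it entirely. The remaining steps are routine: the description of the Galois orbit uses only the irreducibility of the cyclotomic polynomial $\Phi_n$ over $\Q$.
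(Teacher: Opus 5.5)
Your proof is correct and follows essentially the paper's route: both rest on the orbit description $\mathcal O(a)=\{gk:1\le k\le n-1,\ (k,n)=1\}$ and on the classification of $\varphi(n)\le2$. The paper argues by contradiction (emptiness forces $n\in\{2,3,4,6\}$, hence $n<d$, hence $g\ge2$ and $g\in\mathcal O(a)\cap\{2,\ldots,d-2\}$), whereas you split directly into the cases $g\ge2$ and $g=1$; your explicit units in the case $g=1$ also let you avoid the totient classification altogether.
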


\begin{proof}
Put
\[
 g=(a,d),\qquad n=\frac d g=\operatorname{ord}(\zeta^a),
 \qquad q=\frac d n=g.
\]
Since $a\not\equiv0\pmod d$, one has $n\ge2$.  The element $\zeta^a$ is a
primitive $n$th root of unity, and therefore
\begin{equation}\label{eq:cyclotomic-orbit-formula}
 \mathcal O(a)
 =\left\{qu:1\le u\le n-1,\ (u,n)=1\right\},
 \qquad
 \#\mathcal O(a)=\varphi(n).
\end{equation}

Assume
\[
 \mathcal O(a)\cap\{2,\ldots,d-2\}=\varnothing.
\]
Then
\[
 \mathcal O(a)\subseteq\{1,d-1\},
 \qquad \varphi(n)\le2.
\]
Write
\[
 n=2^\alpha\prod_{j=1}^s p_j^{e_j},
 \qquad p_j\ge3,
\]
where the $p_j$ are distinct odd primes.  Euler's formula gives
\begin{equation}\label{eq:small-totient}
 \varphi(n)=
 \begin{cases}
  2^{\alpha-1}\displaystyle\prod_{j=1}^s
     p_j^{e_j-1}(p_j-1),&\alpha\ge1,\\[4pt]
  \displaystyle\prod_{j=1}^s p_j^{e_j-1}(p_j-1),&\alpha=0.
 \end{cases}
\end{equation}
The inequality $\varphi(n)\le2$ implies
\[
 \alpha\le2,\qquad p_j-1\le2,\qquad e_j=1.
\]
Hence every odd prime divisor is $3$, and
\[
 n=2^\alpha3^\varepsilon,\qquad
 0\le\alpha\le2,\qquad 0\le\varepsilon\le1.
\]
Since $\varphi(12)=4$ and $n\ge2$, it follows that
\begin{equation}\label{eq:small-totient-classification}
 n\in\{2,3,4,6\}.
\end{equation}

Because $n\mid d$, $d\ge7$, and $n\in\{2,3,4,6\}$, one has $d\ne n$ and
therefore
\[
 q=\frac d n\ge2.
\]
Taking $u=1$ in \eqref{eq:cyclotomic-orbit-formula} gives $q\in\mathcal O(a)$.
Moreover,
\[
 2\le q=\frac d n\le\frac d2\le d-2.
\]
Thus
\[
 q\in\mathcal O(a)\cap\{2,\ldots,d-2\},
\]
contradicting the assumption.
\end{proof}

\begin{proposition}\label{prop:finite-monodromy}
Let $d\ge7$.  The largest rational braid-stable subspace of $\Omega_F$ on
which braid monodromy has finite image is $A_F\otimes\Q$.
\end{proposition}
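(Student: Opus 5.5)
Write $\Omega_{F,\Q}$ for the rational structure, so that $\Omega_{F,\Q}\otimes\C=\Omega_F$. We have to show two things. First, $A_F\otimes\Q$ is a rational braid-stable subspace with finite monodromy. Second, every rational braid-stable subspace $V\subseteq\Omega_{F,\Q}$ with finite monodromy satisfies $V\subseteq A_F\otimes\Q$. The first claim is Remark~\ref{rem:finite-AF}: braids act on the generators $\operatorname{ch}(L_r)$ by permuting root labels, so the image is a quotient of $\mathfrak S_d$.

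For the second claim, fix such a $V$. The scalar automorphism $\tau$ is defined over $\Q$ on the rational vanishing cohomology and commutes with braid monodromy. Hence $V+\tau V+\cdots+\tau^{d-1}V$ is again rational, braid-stable and of finite monodromy, being a quotient of a direct sum of copies of $V$. So we may assume $V$ is $\tau$-stable. Then $V_\C=\bigoplus_r (V_\C\cap W_r)$ by \eqref{eq:sector-decomposition}, and each $V_\C\cap W_r$ is braid-stable. By the irreducibility established above, $W_r\simeq E_{\zeta^r}$ is irreducible for $1\le r\le d-1$, so each piece $V_\C\cap W_r$ is either $0$ or all of $W_r$. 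Since $V$ is rational, the set of $r\ne0$ with $W_r\subseteq V_\C$ is stable under $\operatorname{Gal}(\Q(\zeta)/\Q)$. If this set is nonempty, Lemma~\ref{lem:orbit} gives some $W_r\subseteq V_\C$ with $2\le r\le d-2$. It remains to show that $B_d$ acts on such a $W_r$ with infinite image. That contradicts finite monodromy and forces $V_\C\subseteq W_0=A_F\otimes\C$ by \eqref{eq:W0-equals-AF}, hence $V\subseteq A_F\otimes\Q$.

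The key step is infiniteness of the image on $W_r\simeq E_{\zeta^r}$ for $2\le r\le d-2$, and the argument splits by $r$.
\begin{enumerate}[label=\textup{(\alph*)},leftmargin=2.7em]
\item If $r=d/2$, infiniteness is immediate from \eqref{eq:PL-transvection}: $\sigma_i$ is a nontrivial unipotent transvection, so it has infinite order.
\item If $r\ne d/2$, then by \eqref{eq:squier-signature} the invariant form $\overline h_r$ has signature $(d-r-1,r-1)$ or its reverse. For $2\le r\le d-2$ both entries are $\ge1$, so $\overline h_r$ is indefinite. The plan is to argue that a finite group preserves a positive definite Hermitian form (by averaging). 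Irreducibility then makes the invariant Hermitian form unique up to a real scalar, since Schur's lemma applied to $H_0^{-1}\overline h_r$ forces proportionality. A definite form cannot be proportional to an indefinite one, so finite image is impossible.
\end{enumerate}

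The main obstacle is the reduction in (b). Uniqueness of the invariant form requires complex irreducibility, which we have. We must also check that the irreducibility argument was valid at $t=\zeta^r$ for all $r\ne0$. This needs $1+t\ne0$ and $\overline h_t(\delta_i,\delta_i)\ne0$, and these must be verified from \eqref{eq:squier-gram}: the diagonal entry equals $4\cos^2(\pi r/d)\ne0$ when $r\ne d/2$. A secondary point to watch is that the passage from $V$ to its $\tau$-saturation preserves rationality. This holds because $\tau$ is a topological automorphism of the Milnor fiber, so it acts on $H^1(M_F,\Q)$ compatibly with the residue identification.
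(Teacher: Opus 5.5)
Your proposal is correct and is essentially the paper's own proof. Like the paper, you $\tau$-saturate $V$, split $V_\C$ into the sectors $W_r$ and invoke their irreducibility, use Galois stability together with Lemma~\ref{lem:orbit} to reach a sector with $2\le r\le d-2$, and then exclude finite image there: at $r=d/2$ by the nontrivial transvection, and for $r\ne d/2$ by the averaged positive form plus Schur's lemma against the indefinite form $\overline h_r$. Your side checks ($1+t\ne0$, $\overline h_t(\delta_i,\delta_i)=4\cos^2(\pi r/d)\ne0$ for $r\ne d/2$, and rationality of the $\tau$-saturation) are exactly the points the paper's argument uses implicitly.
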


\begin{proof}
By Remark~\ref{rem:finite-AF} and \eqref{eq:W0-equals-AF},
\[
 A_F\otimes_\Z\C=W_0,\qquad
 \#\rho_{W_0}(B_d)<\infty.
\]
It remains to prove maximality.  Let
\[
 C\subset\Omega_{F,\Q},\qquad
 \rho(\beta)C=C\quad(\beta\in B_d),\qquad
 \#\rho_C(B_d)<\infty.
\]
Since $\rho(\beta)\tau=\tau\rho(\beta)$ and $\tau^d=1$, put
\[
 C^{\tau}:=\sum_{j=0}^{d-1}\tau^jC.
\]
For every $j$,
\[
 \rho_{\tau^jC}(\beta)
 =\tau^j\rho_C(\beta)\tau^{-j},
\]
and hence
\[
 \rho_{C^\tau}(B_d)
 \hookrightarrow
 \prod_{j=0}^{d-1}\rho_{\tau^jC}(B_d).
\]
The group on the right is finite.  Replacing $C$ by $C^\tau$, assume
\[
 \tau C=C.
\]

After complexification, the projectors
\[
 e_r=\frac1d\sum_{j=0}^{d-1}\zeta^{-rj}\tau^j
\]
give
\begin{equation}\label{eq:C-sector-decomposition}
 C_\C=\bigoplus_{r=0}^{d-1}C_r,\qquad
 C_r=e_rC_\C=C_\C\cap W_r.
\end{equation}
Each $C_r$ is braid-stable because
\[
 \rho(\beta)e_r=e_r\rho(\beta).
\]
For $r\ne0$, irreducibility of $W_r$ gives
\begin{equation}\label{eq:C-sector-zero-or-all}
 C_r=0\quad\text{or}\quad C_r=W_r.
\end{equation}

Assume $C_r=W_r$ for some $r\ne0$.  Since $C$ is defined over $\Q$,
\[
 \sigma(C_\C)=C_\C
 \qquad
 \bigl(\sigma\in\operatorname{Gal}(\overline\Q/\Q)\bigr).
\]
Moreover,
\[
 \sigma(W_r)=W_{r'},\qquad
 \sigma(\zeta^r)=\zeta^{r'}.
\]
Thus
\[
 W_{r'}=\sigma(W_r)\subset C_\C
\qquad
 \bigl(r'\in\mathcal O(r)\bigr).
\]
Lemma~\ref{lem:orbit} supplies
\[
 s\in\mathcal O(r)\cap\{2,\ldots,d-2\},
 \qquad W_s\subset C_\C.
\]

Suppose first that $s\ne d/2$.  By
\eqref{eq:squier-signature},
\[
 \operatorname{sign}(\overline h_s)=(p_s,q_s),\qquad
 p_s>0,\qquad q_s>0.
\]
Hence $\overline h_s$ is nondegenerate and indefinite.  Assume, for a
contradiction, that
\[
 \Gamma_s:=\rho_{W_s}(B_d)
\]
is finite.  Choose any positive-definite Hermitian form $g_0$ on $W_s$ and
define
\begin{equation}\label{eq:averaged-positive-form}
 g(x,y):=\frac1{\#\Gamma_s}
 \sum_{\gamma\in\Gamma_s}g_0(\gamma x,\gamma y).
\end{equation}
Then
\[
 g(x,x)>0\quad(x\ne0),\qquad
 g(\gamma_0x,\gamma_0y)=g(x,y)
 \quad(\gamma_0\in\Gamma_s).
\]

Let $W_s^\dagger$ denote the space of conjugate-linear functionals on
$W_s$.  The two nondegenerate forms determine isomorphisms
\[
 \Phi_g,\Phi_h:W_s\longrightarrow W_s^\dagger,\qquad
 \Phi_g(x)=g(x,-),\qquad
 \Phi_h(x)=\overline h_s(x,-).
\]
Define
\begin{equation}\label{eq:defining-A-from-forms}
 A:=\Phi_g^{-1}\Phi_h\in\GL(W_s).
\end{equation}
Equivalently, $A$ is the unique complex-linear operator satisfying
\begin{equation}\label{eq:h-equals-gA}
 \boxed{\ \overline h_s(x,y)=g(Ax,y)\ }
 \qquad(x,y\in W_s).
\end{equation}
The uniqueness follows from
\[
 g(Ax,y)=g(A'x,y)\ \forall y
 \quad\Longrightarrow\quad
 g((A-A')x,y)=0\ \forall y
 \quad\Longrightarrow\quad A=A'.
\]
Hermitian symmetry gives
\[
 g(Ax,y)=\overline h_s(x,y)
 =\overline{\overline h_s(y,x)}
 =\overline{g(Ay,x)}
 =g(x,Ay),
\]
so $A$ is $g$-self-adjoint.

Write $\rho_s=\rho_{W_s}$.  For $\beta\in B_d$, invariance of both forms
gives
\begin{align*}
 g(A\rho_s(\beta)x,\rho_s(\beta)y)
 &=\overline h_s(\rho_s(\beta)x,\rho_s(\beta)y)\\
 &=\overline h_s(x,y)
 =g(Ax,y)\\
 &=g(\rho_s(\beta)Ax,\rho_s(\beta)y).
\end{align*}
Nondegeneracy of $g$ implies
\[
 A\rho_s(\beta)=\rho_s(\beta)A
 \qquad(\beta\in B_d).
\]
Since $W_s$ is an irreducible complex $B_d$-module, Schur's lemma gives
\[
 A=\lambda I,\qquad \lambda\in\C.
\]
The $g$-self-adjointness and invertibility of $A$ give
\[
 \lambda=\overline\lambda,\qquad \lambda\ne0.
\]
Consequently
\[
 \overline h_s(x,y)=\lambda g(x,y),\qquad
 \operatorname{sign}(\overline h_s)=
 \begin{cases}
  (d-2,0),&\lambda>0,\\
  (0,d-2),&\lambda<0,
 \end{cases}
\]
contradicting $p_s,q_s>0$.  Therefore
\[
 \#\rho_{W_s}(B_d)=\infty.
\]

If $s=d/2$, \eqref{eq:PL-transvection} gives
\[
 \rho_{W_s}(\sigma_i)^q=I+qN_i,\qquad
 N_i\ne0,\qquad q\in\Z,
\]
and hence again
\[
 \#\rho_{W_s}(B_d)=\infty.
\]
Both cases contradict $W_s\subset C_\C$ and
$\#\rho_C(B_d)<\infty$.  Thus, by
\eqref{eq:C-sector-zero-or-all},
\[
 C_r=0\qquad(1\le r\le d-1).
\]
Using \eqref{eq:C-sector-decomposition} and
\eqref{eq:W0-equals-AF},
\[
 C_\C\subset W_0=A_F\otimes_\Z\C.
\]
Taking the rational form yields
\[
 C\subset
 (A_F\otimes_\Z\C)\cap\Omega_{F,\Q}
 =A_F\otimes_\Z\Q.
\]
The reverse inclusion has finite braid monodromy by
Remark~\ref{rem:finite-AF}; hence $A_F\otimes\Q$ is maximal.
\end{proof}

\begin{remark}
Proposition~\ref{prop:finite-monodromy} uses only infinitude and
irreducibility.
The stronger arithmeticity theorem \cite{Venkataramana} is
consistent with, but not necessary for, the argument.  Appendix~A gives a
compact matrix verification of the signature and connected-chain steps.
\end{remark}

\section{Very general rational Hodge classes}

Let $\mathbb V_{\Q}$ be the rational local system underlying the
identity-sector summand of the weight-zero noncommutative Hodge structure of
the family $\mathcal G_b$ over $B$.  Its complexification is the
scalar-invariant summand $\Omega_{f_b}^{T=1}$ in
\eqref{eq:graded-hh-decomposition}.  The identity-sector projector is
defined over $\Q$ and therefore splits the rational topological $K$-local
system.  The twisted de Rham model \cite{Efimov} supplies the flat connection
and filtration, and the rational lattice is induced by the topological
realization \cite{Blanc}.  The polarization is the singularity
pairing on vanishing cohomology, whose compatibility with the Hodge
structure is established in \cite{Steenbrink}; Thom--Sebastiani
compatibility follows from \cite{Preygel}.  Thus the Hodge-locus argument
is applied to the polarizable rational variation attached to the identity
sector of the graded category.  The complementary point-sector local system
is constant of type $(0,0)$ and was generated algebraically in
Lemma~\ref{lem:point-sector-generators}.
Write
\[
 \mathcal V=\mathbb V_{\Q}\otimes_{\Q}\mathcal O_B,\qquad
 \nabla:\mathcal V\longrightarrow
 \mathcal V\otimes\Omega_B^1,
\]
and let $F^\bullet\mathcal V$ be its Hodge filtration.  For every
$b\in B$,
\begin{equation}\label{eq:fiberwise-zero-zero}
 \mathbb V_b^{0,0}
 =F_b^0\cap\overline{F_b^0},\qquad
 \Hdg(\mathbb V_{\Q,b})
 =\mathbb V_{\Q,b}\cap F_b^0\cap\overline{F_b^0}.
\end{equation}

\begin{lemma}\label{lem:very-general-hodge}
At a very general point of a polarizable rational variation of Hodge
structure, every rational Hodge class belongs to a rational sub-local-system
of type $(0,0)$.
\end{lemma}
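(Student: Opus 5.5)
The plan is the standard Hodge-locus argument, combined with the theorem of the fixed part. Let $\mathbb V_{\Q}$ be a polarizable rational variation over a connected smooth base $B$, and fix a base point $b_0$. For each flat section germ, consider the locus where it is of type $(0,0)$. More precisely, pass to the universal cover $\widetilde B\to B$ and fix $v\in\mathbb V_{\Q,b_0}$, transported to a flat section $\tilde v$ on $\widetilde B$. The set
\[
 Z_v=\{\tilde b\in\widetilde B:\tilde v(\tilde b)\in F^0_{\tilde b}\}
\]
is a closed analytic subset. It is cut out by the vanishing of the image of $\tilde v$ in $\mathcal V/F^0\mathcal V$. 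Its image in $B$ is the Hodge locus of $v$. By the theorem of Cattani--Deligne--Kaplan \cite{CDK}, the image of each component is a closed algebraic subvariety of $B$. I will use the quasi-projectivity of $B=U_d^{m+1}$, which is an open subset of affine space, together with polarizability. Since $\mathbb V_{\Q,b_0}$ is countable, the union over all $v$ of the proper algebraic Hodge loci is a countable union of proper closed algebraic subsets. Call this union $\Sigma$; a very general point is one outside $\Sigma$.

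Now take $b\notin\Sigma$ and a rational Hodge class $v\in\Hdg(\mathbb V_{\Q,b})$. The Hodge locus of $v$ contains $b$, so it is not proper and equals all of $B$. Hence the flat continuation of $v$ is of type $(0,0)$ at every point along every path, and the same holds for every monodromy translate $\gamma v$. Let $C\subset\mathbb V_{\Q,b}$ be the $\Q$-span of the orbit $\pi_1(B,b)\cdot v$. This is a rational sub-local-system, and each fiber of it is spanned by classes of type $(0,0)$. Using \eqref{eq:fiberwise-zero-zero}, $C$ is then a sub-local-system of type $(0,0)$, that is, $C_{\C}\subset F^0\cap\overline{F^0}$ everywhere, and it contains $v$. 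This proves the statement as formulated.

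For the later application, I would add one remark. Restricted to $C$, the polarization is definite, since on weight-zero classes of type $(0,0)$ it has a fixed sign. It is also monodromy invariant, and $C$ carries a monodromy-invariant lattice from $K^{\mathrm{top}}$. So the monodromy on $C$ lies in a discrete subgroup of a compact unitary group, hence is finite. This is what Proposition~\ref{prop:finite-monodromy} needs.

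The main obstacle is making sure \cite{CDK} applies as stated. It requires a polarizable VHS on a smooth quasi-projective base with quasi-unipotent local monodromy at infinity, in the admissible setting. Here the variation is the geometric one coming from the Milnor fibers and vanishing cohomology of the family, via \cite{Steenbrink}. I expect to justify the hypotheses by this geometric origin, which makes the variation a direct summand of the cohomology of a family of varieties. Alternatively, I would fall back on the weaker countability statement that only needs analyticity of the loci, since \emph{very general} only requires a countable union of proper analytic subsets, each of which is nowhere dense.
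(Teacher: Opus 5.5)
Your proposal is correct and follows the paper's proof essentially step for step. The shared steps are: the Hodge locus $Z_v$ on the universal cover, algebraicity of its image via \cite{CDK}, countability of $\mathbb V_{\Q,b_0}$ to define the exceptional set, the conclusion $Z_v=\widetilde B$ at a point outside that set, and the $\Q$-span of the monodromy orbit as the type-$(0,0)$ sub-local-system. Your use of $F^0$ alone instead of $F^0\cap\overline{F^0}$ is harmless, since a rational weight-zero class in $F^0$ automatically lies in $\overline{F^0}$. Your closing finite-monodromy remark is the content of the paper's separate Lemma~\ref{lem:type-zero-finite}, and it is used in Proposition~\ref{prop:Hodge-equals-A} rather than Proposition~\ref{prop:finite-monodromy}.
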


\begin{proof}
Fix $b_0\in B$ and put
\[
 V_{\Q}:=\mathbb V_{\Q,b_0},\qquad
 \rho:\pi_1(B,b_0)\longrightarrow\GL(V_{\Q}).
\]
On the universal cover $p:\widetilde B\to B$, parallel transport gives
\[
 p^*\mathbb V_{\Q}\simeq V_{\Q}\times\widetilde B.
\]
For $v\in V_{\Q}$, let $v_{\tilde b}$ denote the corresponding flat
section and define
\begin{equation}\label{eq:hodge-locus-v}
 Z(v):=
 \left\{\tilde b\in\widetilde B:
 v_{\tilde b}\in
 F_{\tilde b}^0\cap\overline{F_{\tilde b}^0}\right\}.
\end{equation}
Equivalently, if
\[
 \operatorname{pr}_{p,-p,\tilde b}:
 V_{\C}\longrightarrow V_{\tilde b}^{p,-p}
\]
denotes Hodge projection, then
\begin{equation}\label{eq:hodge-locus-equations}
 Z(v)=
 \bigcap_{p\ne0}
 \left\{\tilde b:
 \operatorname{pr}_{p,-p,\tilde b}(v)=0\right\}.
\end{equation}
By \cite[Theorem~1.1]{CDK}, every descended irreducible
component of a locus \eqref{eq:hodge-locus-v} is algebraic.  Since
\[
 \#V_{\Q}\le\aleph_0,
\]
the union of all proper Hodge loci is a countable union
\[
 \mathcal Z=\bigcup_{\substack{v\in V_{\Q}\\Z(v)\ne\widetilde B}}
 p\bigl(Z(v)\bigr).
\]
Choose
\[
 b\in B\setminus\mathcal Z,\qquad
 \alpha\in
 \mathbb V_{\Q,b}\cap F_b^0\cap\overline{F_b^0}.
\]
After identifying $\mathbb V_{\Q,b}$ with $V_{\Q}$ along a path, one has
$b\in p(Z(\alpha))$.  Hence
\[
 Z(\alpha)=\widetilde B.
\]
For every $\gamma\in\pi_1(B,b)$,
\[
 Z\bigl(\rho(\gamma)\alpha\bigr)=\widetilde B.
\]
Define
\begin{equation}\label{eq:monodromy-orbit-system}
 L_{\Q,b}:=
 \Span_{\Q}\{\rho(\gamma)\alpha:
 \gamma\in\pi_1(B,b)\}\subset\mathbb V_{\Q,b}.
\end{equation}
Then
\[
 \rho(\delta)L_{\Q,b}=L_{\Q,b}
 \qquad(\delta\in\pi_1(B,b)),
\]
so $L_{\Q,b}$ is the fiber of a rational sub-local-system
$\mathbb L_{\Q}\subset\mathbb V_{\Q}$.  Equations
\eqref{eq:hodge-locus-equations} and
\eqref{eq:monodromy-orbit-system} give
\[
 \mathbb L_{\C,\tilde b}
 \subset F_{\tilde b}^0\cap\overline{F_{\tilde b}^0}
 =\mathbb V_{\tilde b}^{0,0}
 \qquad(\tilde b\in\widetilde B).
\]
\end{proof}

\begin{lemma}\label{lem:type-zero-finite}
An integral polarizable local system of pure type $(0,0)$ has finite
monodromy.
\end{lemma}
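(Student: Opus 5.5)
The plan is the standard argument: the polarization is positive definite on a type-$(0,0)$ system, so the monodromy is unitary and integral at the same time. Let $\mathbb L_{\Z}$ be the integral local system, with fiber $L_{\Z}$ at a base point, and let
\[
 \rho:\pi_1(B,b)\longrightarrow\GL(L_{\Z})
\]
be its monodromy. Let $Q$ be the flat polarization, a $(-1)^{w}$-symmetric form defined over $\Q$, and here $w=0$. For a pure weight-zero structure, polarizability means that the Hodge--Riemann form $h(x,y)=Q(Cx,\overline y)$ is positive definite on $V_{\C}$, where $C$ is the Weil operator acting by $i^{p-q}$ on $V^{p,q}$. Since every fiber is entirely of type $(0,0)$, the operator $C$ is the identity. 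Hence $Q$ itself is a rational symmetric bilinear form whose extension $Q(x,\overline y)$ is positive definite. In particular $Q$ is positive definite on $L_{\R}=L_{\Z}\otimes\R$.

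Flatness of $Q$ gives $\rho(\gamma)\in O(L_{\R},Q)$ for every $\gamma$. This group is compact because $Q$ is definite. The key step is that $\rho(\pi_1)$ also lies in $\GL(L_{\Z})$. After scaling $Q$ by a positive integer we may assume $Q$ is integral on $L_{\Z}$. Then
\[
 \rho(\pi_1)\subset O(L_{\R},Q)\cap\GL(L_{\Z}),
\]
which is the intersection of a compact subgroup with a discrete subgroup of $\GL(L_{\R})$, hence finite.

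For a more elementary version of this step, fix a $\Z$-basis $e_1,\dots,e_n$ of $L_{\Z}$. Each $\rho(\gamma)e_i$ is a lattice vector satisfying $Q(\rho(\gamma)e_i,\rho(\gamma)e_i)=Q(e_i,e_i)$. A positive definite form has only finitely many lattice vectors of bounded norm, so there are only finitely many possible matrices $\rho(\gamma)$.

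The main obstacle is bookkeeping rather than mathematics. One has to make sure that the polarization in the sense used in Section~7 (the singularity pairing, after Tate normalization to weight zero) really becomes definite on a pure $(0,0)$ piece, including the correct sign and Weil-operator convention. If the sign convention makes $Q$ negative definite, I will replace $Q$ by $-Q$, which changes nothing in the argument. In our application the system $\mathbb L_{\Q}$ from Lemma~\ref{lem:very-general-hodge} is only rational. I will choose a monodromy-stable lattice in it, for example the intersection with the integral topological $K$-lattice, or the $\Z$-span of the orbit of finitely many lattice vectors. This is legitimate because the ambient monodromy preserves an integral lattice. The restriction of the polarization to a sub-VHS remains a polarization, so the argument applies to $\mathbb L$ directly.
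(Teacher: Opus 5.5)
Your proposal is correct and follows essentially the same route as the paper. The Weil operator is trivial on a pure type-$(0,0)$ system, so the integral polarization $Q$ is positive definite and monodromy lands in $O(Q,\Z)$; this group is finite because each column of a monodromy matrix is a lattice vector of fixed $Q$-norm. Your passage to the integral lattice $\mathbb L_{\Q}\cap\mathbb V_{\Z}$ is also what the paper does in Proposition~\ref{prop:Hodge-equals-A}.
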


\begin{proof}
Let $\mathbb L_{\Z}$ be the integral lattice and let
\[
 Q:\mathbb L_{\Z}\times\mathbb L_{\Z}\longrightarrow\Z
\]
be the polarization.  Since
\[
 \mathbb L_{\C}=\mathbb L^{0,0},
\]
the Hodge--Riemann relation is
\[
 Q(x,\overline x)>0\qquad(0\ne x\in\mathbb L_{\C}).
\]
Thus, in an integral basis of $L_{\Z}:=\mathbb L_{\Z,b}$,
\[
 Q=(q_{ij})\in M_n(\Z),\qquad
 Q^{\mathsf T}=Q,\qquad Q>0.
\]
Monodromy satisfies
\begin{equation}\label{eq:integral-orthogonal-monodromy}
 \rho(\gamma)\in
 O(Q,\Z):=
 \{M\in\GL_n(\Z):M^{\mathsf T}QM=Q\}.
\end{equation}
Let $\mu_{\min}>0$ be the smallest eigenvalue of $Q$.  If
$M=(m_1,\ldots,m_n)\in O(Q,\Z)$, then
\[
 m_j^{\mathsf T}Qm_j=q_{jj},\qquad
 \mu_{\min}\|m_j\|^2\le q_{jj}.
\]
Hence
\[
 m_j\in
 \left\{m\in\Z^n:
 \|m\|^2\le\frac{\max_iq_{ii}}{\mu_{\min}}\right\},
\]
a finite set.  Therefore
\[
 \#O(Q,\Z)<\infty,
 \qquad
 \#\rho\bigl(\pi_1(B,b)\bigr)<\infty.
\]
\end{proof}

\begin{proposition}\label{prop:Hodge-equals-A}
For very general $b\in B$,
\[
 \Hdg_{\mathrm{id}}(\mathcal G_b,\Q)
 :=\Hdg(\mathcal G_b,\Q)\cap(\Omega_{f_b})_{d(m+1)}
 =A_{b,\Q}.
\]
\end{proposition}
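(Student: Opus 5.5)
The inclusion $A_{b,\Q}\subseteq\Hdg_{\mathrm{id}}(\mathcal G_b,\Q)$ is already contained in Proposition~\ref{prop:algebraic-lattice}. For the reverse inclusion I will combine Lemma~\ref{lem:very-general-hodge}, Lemma~\ref{lem:type-zero-finite} and Proposition~\ref{prop:finite-monodromy}.

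Let $\alpha\in\Hdg_{\mathrm{id}}(\mathcal G_b,\Q)$ with $b$ very general. First apply Lemma~\ref{lem:very-general-hodge} to the polarizable rational variation $\mathbb V_{\Q}$ attached to the identity sector. This places $\alpha$ in a rational sub-local-system $\mathbb L_{\Q}\subset\mathbb V_{\Q}$ of pure type $(0,0)$. Since $\mathbb L_{\Q}$ is spanned by a monodromy orbit inside the integral topological $K$-lattice, the intersection $\mathbb L_{\Z}:=\mathbb L_{\Q}\cap\mathbb V_{\Z}$ is a monodromy-stable lattice. The polarization restricts to it, and it is definite after the Tate normalization used in \eqref{eq:type-zero-zero}. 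Lemma~\ref{lem:type-zero-finite} then shows that $\pi_1(B,b)$ acts on $\mathbb L_{\Q}$ through a finite group.

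Next restrict to the subgroup of $\pi_1(B)=\prod_s\pi_1(U_d)$ generated by braids in the factors. Via \eqref{eq:TS-jac}, the identity sector sits inside $\bigotimes_s\Omega_{F_s}$, and the $s$th factor braid group $B_d$ acts only on the $s$th tensor factor.

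\textbf{Main step.} I must show that a rational subspace $C\subset\bigotimes_s\Omega_{F_s,\Q}$ on which $\prod_sB_d$ acts with finite image lies in $\bigotimes_sA_{F_s,\Q}$. I would first decompose with respect to the $\tau_s$-eigencharacters, so $C_\C=\bigoplus_{\mathbf r}C_{\mathbf r}$ with $C_{\mathbf r}\subseteq W_{r_0}\otimes\cdots\otimes W_{r_m}$. Averaging over the $\tau_s$ as in the proof of Proposition~\ref{prop:finite-monodromy} makes $C$ stable under each $\tau_s$, and these commute with the braids.

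Suppose some $C_{\mathbf r}\ne0$ with, say, $r_0\ne0$. As a $B_d^{(0)}$-module, $W_{r_0}\otimes(\text{rest})$ is isotypic of type $W_{r_0}$, so $C_{\mathbf r}$ contains a copy of $W_{r_0}$. The first factor braid group therefore acts on a subquotient isomorphic to $W_{r_0}$ with finite image.

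Rationality of $C$ gives Galois stability. The Galois orbit moves $\mathbf r$ to $\sigma\mathbf r$ with $\sigma(\zeta^{r_0})=\zeta^{r_0'}$, and Lemma~\ref{lem:orbit} lets me choose $r_0'\in\{2,\ldots,d-2\}$. The indefinite-signature and Schur argument of Proposition~\ref{prop:finite-monodromy} then shows that $B_d$ acts on $W_{r_0'}$ with infinite image, which is a contradiction. The transvection case $r_0'=d/2$ is handled the same way.

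Hence every $C_{\mathbf r}$ with some $r_s\ne0$ vanishes, and $C_\C\subseteq\bigotimes_sW_0=\bigotimes_s(A_{F_s}\otimes\C)=A_b\otimes\C$ by \eqref{eq:W0-equals-AF}. Taking rational points gives $\alpha\in A_{b,\Q}$.

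I expect the obstacle to be justifying the sub-local-system and lattice step, namely the existence of a definite integral polarization on $\mathbb L$. I would handle it by invoking the singularity pairing from \cite{Steenbrink} together with the Thom--Sebastiani compatibility from \cite{Preygel}, as set up at the start of this section.
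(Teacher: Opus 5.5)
Your proposal is correct and follows the paper's proof: Hodge-locus lemma, finite monodromy from the definite integral polarization, closure under the scalar operators $\tau_\nu$, the multi-sector decomposition $C_\C=\bigoplus_{\mathbf r}C_{\mathbf r}$, exclusion of every $\mathbf r\neq\mathbf 0$ by infinite braid monodromy, and the conclusion $C_\C\subseteq\bigotimes_\nu W_{\nu,0}=A_b\otimes\C$. The differences are small refinements. You use isotypicity under the single factor $B_d^{(0)}$ instead of irreducibility of $U_{\mathbf r}$ under the whole product. Your Galois transfer via Lemma~\ref{lem:orbit} to a sector with $r_0'\in\{2,\ldots,d-2\}$ also explicitly covers $r_\nu\in\{1,d-1\}$, where $\overline h_{r_\nu}$ is definite; there the paper's direct assertion that $\rho_{W_{\nu,r_\nu}}(B_d^{(\nu)})$ is infinite tacitly needs exactly this step.
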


\begin{proof}
We prove the two inclusions separately.
Proposition~\ref{prop:algebraic-lattice} gives
\begin{equation}\label{eq:A-contained-Hdg}
 A_{b,\Q}\subseteq\Hdg_{\mathrm{id}}(\mathcal G_b,\Q).
\end{equation}
Let
\[
 \alpha\in\Hdg_{\mathrm{id}}(\mathcal G_b,\Q),
 \qquad b\in B\setminus\mathcal Z.
\]
We must prove $\alpha\in A_{b,\Q}$.  At a very general parameter, the
parallel transports of $\alpha$ remain of type $(0,0)$.  They generate the
monodromy-orbit space
\[
 L_{\Q,b}
 =\Span_{\Q}\{\rho(\gamma)\alpha:
                 \gamma\in\pi_1(B,b)\}.
\]
Lemma~\ref{lem:very-general-hodge} says precisely that this space is the
fiber of a rational sub-local-system $\mathbb L_{\Q}$:
\[
 \alpha\in L_{\Q,b},\qquad
 \mathbb L_{\Q}\subset\mathbb V_{\Q},\qquad
 \mathbb L_{\C,b}\subset\mathbb V_b^{0,0}.
\]
To apply the polarization, retain the integral vectors in this rational
system:
\[
 \mathbb L_{\Z}:=
 \mathbb L_{\Q}\cap\mathbb V_{\Z}.
\]
The restriction of the polarization to a type-$(0,0)$ system is positive
definite.  Lemma~\ref{lem:type-zero-finite} therefore gives
\begin{equation}\label{eq:L-finite-monodromy}
 \#\rho_{\mathbb L}\bigl(\pi_1(B,b)\bigr)<\infty.
\end{equation}

Write
\[
 B=\prod_{\nu=0}^{m}U_d^{(\nu)},\qquad
 G:=\pi_1(B,b)=\prod_{\nu=0}^{m}B_d^{(\nu)}.
\]
The product decomposition records that the $m+1$ binary forms vary
independently.  Accordingly, the factor $B_d^{(\nu)}$ moves only the roots
of $F_\nu$ and acts trivially on the other binary state spaces.

For each binary factor, $\tau_\nu$ is its order-$d$ scalar monodromy.
The operator $e_{\nu,r}$ is the spectral projector onto the
$\zeta^r$-eigenspace.  Explicitly,
\[
 \tau_\nu^d=1,\qquad
 e_{\nu,r}:=
 \frac1d\sum_{j=0}^{d-1}\zeta^{-rj}\tau_\nu^j,
 \qquad
 W_{\nu,r}:=e_{\nu,r}\Omega_{F_\nu}.
\]
The operators $\tau_0,\ldots,\tau_m$ commute with one another and with
$G$.  The subspace $\mathbb L_{\C,b}$ need not itself be stable under every
$\tau_\nu$, so the projectors $e_{\nu,r}$ cannot yet be applied internally.
For this reason, take the smallest subspace containing
$\mathbb L_{\C,b}$ and stable under all scalar monodromies:
\begin{equation}\label{eq:L-scalar-closure}
 \widetilde L_{\C}:=
 \sum_{0\le a_0,\ldots,a_m\le d-1}
 \tau_0^{a_0}\cdots\tau_m^{a_m}\mathbb L_{\C,b}.
\end{equation}
This is the scalar closure of $\mathbb L_{\C,b}$.  It does not destroy
finite braid monodromy: the scalar operators commute with $G$, so the
$G$-representation on every summand in
\eqref{eq:L-scalar-closure} is conjugate to the representation on
$\mathbb L_{\C,b}$.  Indeed, \eqref{eq:L-finite-monodromy} and
$\tau_\nu\rho(\gamma)=\rho(\gamma)\tau_\nu$ imply
\[
 \rho_{\widetilde L}(G)
 \hookrightarrow
 \prod_{0\le a_0,\ldots,a_m\le d-1}
 \rho_{\tau_0^{a_0}\cdots\tau_m^{a_m}\mathbb L}(G),
\]
where every group on the right is finite.  Hence
\[
 \#\rho_{\widetilde L}(G)<\infty.
\]

The multi-index $\mathbf r=(r_0,\ldots,r_m)$ records one scalar eigenvalue
for each binary factor.  The product projector
$e_{\mathbf r}$ extracts the simultaneous eigenspace on which
$\tau_\nu$ acts by $\zeta^{r_\nu}$:
\[
 e_{\mathbf r}:=\prod_{\nu=0}^{m}e_{\nu,r_\nu},
 \qquad
 \tau_\nu e_{\mathbf r}
 =\zeta^{r_\nu}e_{\mathbf r}.
\]
Thus, for
\[
 \mathbf r=(r_0,\ldots,r_m)\in(\Z/d\Z)^{m+1},
\]
Thom--Sebastiani and \eqref{eq:sector-decomposition} give
\begin{equation}\label{eq:TS-multisectors}
 \mathbb V_{\C,b}
 =
 \bigoplus_{\substack{\mathbf r\in(\Z/d\Z)^{m+1}\\
 r_0+\cdots+r_m\equiv0\;(\mathrm{mod}\ d)}}
 U_{\mathbf r},
 \qquad
 U_{\mathbf r}:=
 \bigotimes_{\nu=0}^{m}W_{\nu,r_\nu}.
\end{equation}
The congruence
\[
 r_0+\cdots+r_m\equiv0\pmod d
\]
is the condition that the total scalar monodromy on the tensor product is
the identity; precisely these multi-sectors form the identity orbifold
sector of $\mathcal G_b$.  Since $\widetilde L_{\C}$ is stable under every $\tau_\nu$, it
is stable under every $e_{\mathbf r}$ and therefore decomposes as
\begin{equation}\label{eq:L-multisector-decomposition}
 \widetilde L_{\C}
 =\bigoplus_{\mathbf r}\widetilde L_{\mathbf r},
 \qquad
 \widetilde L_{\mathbf r}
 :=e_{\mathbf r}\widetilde L_{\C}
 =\widetilde L_{\C}\cap U_{\mathbf r}.
\end{equation}

Recall that a $B_d^{(\nu)}$-module is irreducible if its only
$B_d^{(\nu)}$-stable subspaces are $0$ and the whole module.  For
$r_\nu\ne0$, the irreducibility of $W_{\nu,r_\nu}$ was proved in
Section~6.4.  For $r_\nu=0$, \eqref{eq:W0-equals-AF} identifies
\[
 W_{\nu,0}\simeq
 \left\{(x_1,\ldots,x_d)\in\C^d:
 \sum_{i=1}^dx_i=0\right\},
\]
with the standard irreducible representation of $\mathfrak S_d$, through
which $B_d^{(\nu)}$ acts.  Hence every $W_{\nu,r_\nu}$ is irreducible and
$U_{\mathbf r}$ is irreducible under
$G=\prod_\nu B_d^{(\nu)}$.  The projected space
$\widetilde L_{\mathbf r}$ is $G$-stable because both
$\widetilde L_{\C}$ and $e_{\mathbf r}$ commute with $G$.  Irreducibility
therefore gives the all-or-nothing alternative
\begin{equation}\label{eq:L-multisector-zero-all}
 \widetilde L_{\mathbf r}=0
 \quad\text{or}\quad
 \widetilde L_{\mathbf r}=U_{\mathbf r}.
\end{equation}
Assume
\[
 \widetilde L_{\mathbf r}=U_{\mathbf r}\ne0,
 \qquad r_\nu\ne0
\]
for some $\nu$.  We now use the independence of the binary parameter
spaces.  Restricting the $G$-action to $B_d^{(\nu)}$ means that only the
roots of $F_\nu$ move; every other tensor factor remains fixed.  Therefore
\begin{equation}\label{eq:one-factor-tensor-action}
 \rho_{U_{\mathbf r}}(\beta)
 =
 I\otimes\cdots\otimes
 \rho_{W_{\nu,r_\nu}}(\beta)
 \otimes\cdots\otimes I.
\end{equation}
If $T\in\GL(W_{\nu,r_\nu})$ and the other tensor factors are nonzero, then
\[
 I\otimes\cdots\otimes T\otimes\cdots\otimes I=I
 \quad\Longrightarrow\quad T=I.
\]
Thus tensoring with the identity operators does not kill any nontrivial
operator on $W_{\nu,r_\nu}$: the homomorphism
\[
 \GL(W_{\nu,r_\nu})
 \longrightarrow\GL(U_{\mathbf r}),\qquad
 T\longmapsto
 I\otimes\cdots\otimes T\otimes\cdots\otimes I
\]
is injective.  Section~6.4 and Proposition~\ref{prop:finite-monodromy} give
\[
 \#\rho_{W_{\nu,r_\nu}}\bigl(B_d^{(\nu)}\bigr)=\infty,
\]
and hence, by \eqref{eq:one-factor-tensor-action},
\[
 \#\rho_{U_{\mathbf r}}\bigl(B_d^{(\nu)}\bigr)=\infty.
\]
This contradicts $\#\rho_{\widetilde L}(G)<\infty$.  Consequently,
\[
 \widetilde L_{\mathbf r}\ne0
 \quad\Longrightarrow\quad
 \mathbf r=(0,\ldots,0).
\]
In other words, finite monodromy removes every simultaneous eigenspace in
which at least one scalar monodromy has a nontrivial eigenvalue.  The only
remaining component is the identity multi-sector.  Therefore
equations \eqref{eq:TS-multisectors} and
\eqref{eq:L-multisector-decomposition} yield
\[
 \mathbb L_{\C,b}
 \subseteq\widetilde L_{\C}
 \subseteq U_{\mathbf0}
 =\bigotimes_{\nu=0}^{m}W_{\nu,0}.
\]
The identity sector $W_{\nu,0}$ of each binary factor was identified in
\eqref{eq:W0-equals-AF} with the complex span of the explicit Chern
characters.  Hence
\[
 U_{\mathbf0}
 =\bigotimes_{\nu=0}^{m}
 \bigl(A_{F_\nu}\otimes_\Z\C\bigr)
 =A_b\otimes_\Z\C.
\]
Since $\alpha$ is rational, membership in this complexification is
equivalent to membership in its rational form.  Therefore
\[
 \alpha\in
 (A_b\otimes_\Z\C)\cap\mathbb V_{\Q,b}
 =A_{b,\Q}.
\]
Together with \eqref{eq:A-contained-Hdg}, this proves
\[
 \Hdg_{\mathrm{id}}(\mathcal G_b,\Q)=A_{b,\Q}.
\]
\end{proof}

\begin{proof}[Proof of Theorem~\ref{thm:main-graded}]
The rational sector decomposition is
\[
 \HH_0(\mathcal G_b)_{\Q}
 =(\Omega_{f_b})_{d(m+1),\Q}\oplus T_{b,\Q}.
\]
Both summands are compatible with the negative-cyclic filtration and the
topological rational lattice.  Since $T_{b,\Q}$ is constant of type
$(0,0)$, Lemma~\ref{lem:point-sector-generators} and
Proposition~\ref{prop:Hodge-equals-A} give
\[
 \Hdg(\mathcal G_b,\Q)
 =\Hdg_{\mathrm{id}}(\mathcal G_b,\Q)\oplus T_{b,\Q}
 =A_{b,\Q}\oplus T_{b,\Q}
 \subseteq\operatorname{ch}\bigl(K_0(\mathcal G_b)_{\Q}\bigr).
\]
The reverse inclusion follows from compatibility of the algebraic,
negative-cyclic, and topological Chern characters.  Hence
\[
 \operatorname{ch}\bigl(K_0(\mathcal G_b)_{\Q}\bigr)
 =\Hdg(\mathcal G_b,\Q)
 =A_{b,\Q}\oplus T_{b,\Q}.
\]
The Hodge rank is $(d-1)^{m+1}+d-1$, and the topological ranks were computed
in \eqref{eq:graded-hp-zero}--\eqref{eq:graded-hp-one}.  This proves all
assertions of Theorem~\ref{thm:main-graded}.
\end{proof}

\section{Projective hypersurfaces}

We apply the additivity theorem
\cite[Section~3.1, Theorem~3.13 and Remark~3.14]{Lin} to the description of the graded
matrix-factorization category in \cite[Theorem~3.11]{Orlov}.  Put $N=2m+2$.

\begin{theorem}\label{thm:orlov-decompositions}
Let $f\in\C[x_1,\ldots,x_N]$ be homogeneous of degree $d$, assume that
$X=\{f=0\}\subset\PP^{N-1}$ is smooth, and put
$\mathcal G_f=\MFgr(f)$.  By \cite[Theorem~3.11]{Orlov}, there are
dg-enhanced semi-orthogonal decompositions
\begin{align}
 d<N:\quad
 \Perf(X)&=\bigl\langle
 \mathcal O_X(d-N+1),\ldots,\mathcal O_X,\mathcal G_f
 \bigr\rangle,
 \label{eq:orlov-fano}\\
 d=N:\quad
 \Perf(X)&\simeq\mathcal G_f,
 \label{eq:orlov-cy}\\
 d>N:\quad
 \mathcal G_f&=\bigl\langle
 K_f,K_f(-1),\ldots,K_f(N-d+1),\Perf(X)
 \bigr\rangle,
 \label{eq:orlov-general-type}
\end{align}
where $K_f(j)$ is a grading shift of the stabilized residue field.  The line
bundles in \eqref{eq:orlov-fano} and the objects preceding $\Perf(X)$ in
\eqref{eq:orlov-general-type} are exceptional.
\end{theorem}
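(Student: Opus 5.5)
Theorem~\ref{thm:orlov-decompositions} is essentially a restatement of \cite[Theorem~3.11]{Orlov}, so the plan is to match our conventions with Orlov's and then add the two pieces of extra structure asserted here: the dg enhancements and exceptionality. Orlov works with the graded ring $A=S/(f)$, which is Gorenstein with Gorenstein parameter $a=N-d$. His theorem compares $\Dgrsg(A)$ with $D^b(\operatorname{qgr}A)\simeq D^b(\operatorname{coh}X)$, and splits into three cases according to the sign of $a$. The first step is therefore to identify $\mathcal G_f=\MFgr(f)\simeq\Dgrsg(A)$ using \cite[Theorem~3.10]{Orlov}, as recalled in Section~2. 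Since $X$ is smooth, we also have $D^b(\operatorname{coh}X)=\Perf(X)$.

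Next, specialise $a$ in each case.
\begin{enumerate}[label=\textup{(\roman*)},leftmargin=2.7em]
\item If $a>0$, i.e.\ $d<N$, Orlov's semi-orthogonal decomposition of $D^b(\operatorname{coh}X)$ consists of the images $\mathcal O_X(-i-a+1),\ldots,\mathcal O_X(-i)$ of $A(e)$, followed by a copy of $\Dgrsg(A)$. Take the cutoff $i=0$; the twists then run from $\mathcal O_X(d-N+1)$ to $\mathcal O_X$, as in \eqref{eq:orlov-fano}.
\item If $a=0$, Orlov's theorem gives an equivalence, which is \eqref{eq:orlov-cy}.
\item If $a<0$, the decomposition takes place in $\Dgrsg(A)$. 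The exceptional objects are the images $q(k(e))$ of the shifted residue field $k=A/A_{>0}$, and there are $-a=d-N$ of them, followed by $D^b(\operatorname{coh}X)$.
\end{enumerate}

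In case (iii) we must identify $q(k(e))$ with the Koszul factorization $K_f(e)$ from \eqref{eq:stabilized-residue-field}. Under Orlov's equivalence, the image of $k$ in the singularity category corresponds to the matrix factorization obtained by stabilising its $A$-free resolution. For a hypersurface this stabilisation is the Koszul factorization built from $x_i$ and $\frac1d\partial_if$, via Euler's identity; this is \cite[Equation~(2.5)]{PolishchukVaintrobCohFT}. The sign and indexing of the twists ($0,-1,\ldots,N-d+1$) depend on Orlov's conventions, and I would fix them by a direct comparison at $e=0$.

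Exceptionality comes next. For the line bundles in case (i), $\operatorname{Ext}^{>0}(\mathcal O_X,\mathcal O_X)=H^{>0}(\mathcal O_X)=0$ holds because $X$ is a Fano hypersurface, and $\operatorname{End}=\C$. In case (iii), $\operatorname{Hom}_{\Dgrsg}(k,k[n])$ is computed by Orlov's fully faithful embedding to be $\operatorname{Hom}_A(k,k)_0=\C$ in degree $0$ and zero otherwise. Orlov's proof already establishes both facts as part of showing the components are admissible, so I would simply cite them.

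The remaining point, and the one I expect to be the main obstacle, is dg enhancement. Orlov's statement is triangulated, whereas additivity in \cite{Lin} needs dg-enhanced decompositions. The plan is to use that every functor in Orlov's construction is built from derived functors between categories with canonical dg enhancements: graded modules, quasi-coherent sheaves, and the dg quotient defining $\Dgrsg$. Each functor therefore lifts to a dg functor. Semi-orthogonality and generation can be checked on homotopy categories, so the decomposition lifts, with each component carrying the induced enhancement. Uniqueness of enhancements for $\Perf(X)$ would then ensure the result is compatible with any other chosen model.
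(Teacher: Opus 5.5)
Your proposal is correct and takes essentially the same route as the paper. The paper's proof simply invokes Orlov's Theorem~3.11, with the term counts $N-d$ and $d-N$ that your parameter specialisation reproduces, and gets the dg-enhanced form by citing \cite[Theorem~6.13]{BFK}. The only differences are these: you sketch the dg lift directly (full dg subcategories of an enhancement of graded modules, Drinfeld quotients, and uniqueness of enhancements for $\Perf(X)$) instead of citing BFK, and you make explicit both the exceptionality checks and the identification of $qk$ with $K_f$, which the paper leaves implicit.
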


\begin{proof}
The semi-orthogonal decompositions are those of
\cite[Theorem~3.11]{Orlov}; the dg-enhanced form follows from the
equivariant factorization formulation in \cite[Theorem~6.13]{BFK}.  The
first exceptional sequence has $N-d$ terms, and the third has $d-N$ terms.
\end{proof}

\begin{proof}[Proof of Corollary~\ref{cor:projective}]
If $\nabla f_b=0$, then disjointness of the variables implies
$\nabla F_s=0$ for every $s$.  A squarefree binary form has no nonzero
common zero of its partial derivatives, since such a point would be a
multiple projective root.  Thus the only critical point of $f_b$ is the
origin, and $X_b$ is smooth.

By Theorem~\ref{thm:main-graded}, the noncommutative Hodge conjecture holds
for $\mathcal G_b$.  Apply
\cite[Section~3.1, Theorem~3.13 and Remark~3.14]{Lin} to the
decompositions in Theorem~\ref{thm:orlov-decompositions}.  If $d<N$, then
\eqref{eq:orlov-fano} writes $\Perf(X_b)$ as the semi-orthogonal sum of
$\mathcal G_b$ and $N-d$ exceptional components.  If $d=N$, then
\eqref{eq:orlov-cy} is an equivalence.  If $d>N$, then
\eqref{eq:orlov-general-type} writes $\mathcal G_b$ as the
semi-orthogonal sum of $\Perf(X_b)$ and $d-N$ exceptional components.  The
conjecture holds for an exceptional component because it is Morita
equivalent to $\Perf(\C)$ and its Hodge space is generated by the Chern
character of its exceptional generator.  Therefore
\[
 \operatorname{NCHC}\bigl(\Perf(X_b)\bigr)
\]
holds in all three cases.  The comparison with the classical cycle-class
formulation in \cite[Section~3.1, Remark~3.9]{Lin} now gives the rational Hodge
conjecture for $X_b$.

The same additivity theorem, together with
\eqref{eq:main-hodge-rank} and
\eqref{eq:orlov-fano}--\eqref{eq:orlov-general-type}, gives
\[
 \dim_{\Q}\Hdg\bigl(\Perf(X_b),\Q\bigr)
 =\begin{cases}
 \dim_{\Q}\Hdg(\mathcal G_b,\Q)+(N-d),&d<N,\\
 \dim_{\Q}\Hdg(\mathcal G_b,\Q),&d=N,\\
 \dim_{\Q}\Hdg(\mathcal G_b,\Q)-(d-N),&d>N.
 \end{cases}
\]
In every case, this dimension is
\[
 \dim_{\Q}\Hdg\bigl(\Perf(X_b),\Q\bigr)
 =(d-1)^{m+1}+N-1.
\]
Weak Lefschetz \cite[Chapter~7]{Voisin} gives exactly $N-1$ ambient
rational Hodge classes, one in each even degree, including $h^m$ in the
middle.  Therefore
\[
 \dim_{\Q}\Hdg^m_{\mathrm{prim}}(X_b)=(d-1)^{m+1},
 \qquad
 \dim_{\Q}\Hdg^m(X_b)=(d-1)^{m+1}+1.
\]
\end{proof}

\section{Non-Fermat examples}\label{sec:nonfermat}

We prove in this section that the hypersurfaces constructed above are not
Fermat hypersurfaces.  More precisely,
Proposition~\ref{prop:nonfermat} shows that a general member of the split
family is not linearly equivalent to the Fermat hypersurface of the same
degree and dimension.  This verifies that the main theorems supply
genuinely new examples rather than coordinate transforms of the classical
Fermat cases.

Choose
\begin{equation}\label{eq:explicit-split}
 F_s(u_s,v_s)=u_sv_s\prod_{r=1}^{d-2}(u_s-a_{sr}v_s),
\end{equation}
with all roots distinct in each block and the full parameter outside the
countable exceptional union in Lemma~\ref{lem:very-general-hodge}.  These
are actual smooth split hypersurfaces.

\begin{proposition}\label{prop:nonfermat}
A general polynomial of the form \eqref{eq:split-polynomial} is not linearly
equivalent to the Fermat polynomial.
\end{proposition}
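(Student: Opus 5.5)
The plan is to use the Hessian determinant as a covariant that detects the Fermat shape. For a homogeneous $g$ of degree $d$ in $N$ variables and $A\in\GL_N(\C)$, the chain rule gives
\[
 \Hess(g\circ A)=\det(A)^2\,\Hess(g)\circ A .
\]
The Fermat polynomial $x_1^d+\cdots+x_N^d$ has Hessian $d^N(d-1)^N\prod_{i=1}^N x_i^{d-2}$. Hence if $f_b=\mathrm{Fermat}\circ A$, then $\Hess(f_b)$ is a nonzero constant times $\prod_{i=1}^N\ell_i^{d-2}$. Here $\ell_i$ are the $N$ linearly independent linear forms given by the rows of $A$. Thus $\Hess(f_b)$ would have exactly $N$ distinct irreducible factors, each of multiplicity $d-2$.

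Now I compute $\Hess(f_b)$ for the split polynomial. Since the variables of distinct summands are disjoint, the Hessian matrix of $f_b=\sum_s F_s(u_s,v_s)$ is block diagonal with $2\times2$ blocks. Therefore
\[
 \Hess(f_b)=\prod_{s=0}^{m}\Hess(F_s)(u_s,v_s),
\]
where each $\Hess(F_s)$ is a binary form of degree $2(d-2)$ in $(u_s,v_s)$. Every binary form factors into linear forms in its own variables, and $\C[x_1,\ldots,x_N]$ is a UFD. Comparing with the Fermat factorization, each nonzero $\Hess(F_s)$ would have to be of the form $c\,\ell^{d-2}\ell'^{\,d-2}$, with only two distinct linear factors in $(u_s,v_s)$. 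Here $N=2(m+1)$ factors are distributed two per block.

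So it suffices to show the following. For a general squarefree binary form $F$ of degree $d\ge7$, $\Hess(F)\ne0$ and $\Hess(F)$ has more than two distinct projective roots. The locus of $F\in\operatorname{Sym}^d(V^*)$ where $\Hess(F)$ vanishes or has at most two distinct roots is Zariski closed. This holds because having at most $k$ distinct roots is a closed condition on binary forms of fixed degree, and $F\mapsto\Hess(F)$ is polynomial. Hence it is enough to exhibit one form outside this locus. The independent choice of the $F_s$ then makes the condition hold simultaneously for all blocks on a dense open subset of $B$, which includes the very general points used in the paper and the explicit family \eqref{eq:explicit-split}.

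The main obstacle is producing the witness cleanly. First I would try a sparse form such as $F=u^d+u^{d-1}v+v^d$ or $F=uv(u^{d-2}-v^{d-2})$. For these, $\Hess(F)=F_{uu}F_{vv}-F_{uv}^2$ is a short explicit polynomial. Dehomogenizing at $v=1$, I would check that it has at least three distinct roots, for instance by exhibiting three sign changes or an explicit factor with simple roots. If that computation is awkward, a fallback is a dimension count. The forms with $\Hess(F)=c\,\ell^{d-2}\ell'^{d-2}$ are, by the classical characterization, $\GL_2$-equivalent to $u^d+v^d$ or to degenerate forms. They therefore fill a subvariety of dimension at most $4<d+1=\dim\operatorname{Sym}^d(V^*)$. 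This again gives a proper closed locus and completes the proof.
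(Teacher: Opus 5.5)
Your proposal is correct and follows essentially the same route as the paper: factor the Hessian of $f_b$ blockwise and compare it with the Fermat Hessian $\prod_i x_i^{d-2}$, using an open condition on each binary Hessian to get genericity. Your version is slightly more careful than the paper's. You state the transformation law for the Hessian, and you explicitly address nonemptiness of the open set, which the paper leaves implicit. Your witness $uv(u^{d-2}-v^{d-2})$ works: its Hessian has $2(d-2)$ simple roots, which is exactly the reducedness the paper asserts.
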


\begin{proof}
The Hessian matrix of $f_b$ is block diagonal, so
\[
 \det\Hess(f_b)=\prod_{s=0}^{m}\det\Hess(F_s).
\]
For a general binary form, its binary Hessian has $2(d-2)$ distinct roots.
Reducedness is an open condition on the coefficients; for background on
Hessian hypersurfaces, see \cite[Chapter~1, \S~1.1.4]{Dolgachev}.  Thus the
Hessian divisor of $f_b$ is a union
of $2(m+1)(d-2)$ distinct hyperplanes, each of multiplicity one.  The
Hessian determinant of $x_0^d+\cdots+x_{2m+1}^d$ is a nonzero scalar
multiple of $\prod_i x_i^{d-2}$, which has only $2m+2$ hyperplanes, each of
multiplicity $d-2$.  The component multiplicities of the Hessian divisor
are invariant under linear coordinate change.
\end{proof}

\section{Exact numerical invariants}

We now record, for the same examples, the exact ranks appearing in
Theorem~\ref{thm:main-graded} and Corollary~\ref{cor:projective}.  This
comparison exhibits the difference
between the entire rational topological $K$-group and its rational Hodge
subspace, separates the identity and point sectors, and distinguishes the
primitive middle Hodge rank from the full middle Hodge rank.

Set
\[
 \begin{aligned}
 P_{d,m}&=\frac{(d-1)^{2m+2}+d-1}{d},
 &G_{d,m}&=P_{d,m}+d-1,\\
 A_{d,m}&=(d-1)^{m+1},
 &H_{d,m}&=A_{d,m}+d-1.
 \end{aligned}
\]
Here $P_{d,m}$ is the dimension of the identity part of periodic cyclic
homology, $G_{d,m}$ is
$\rk K_0^{\mathrm{top}}(\mathcal G_b)_{\Q}$, and $H_{d,m}$ is
$\dim_{\Q}\Hdg(\mathcal G_b,\Q)$.  Applying
\cite[Section~3.1, Theorem~3.13 and Remark~3.14]{Lin} to the decompositions of
\cite[Theorem~3.11]{Orlov}, as in the
proof of Corollary~\ref{cor:projective}, identifies $A_{d,m}$ with the
primitive middle rational Hodge rank of $X_b$.  The full middle rank is
$A_{d,m}+1$.

\begin{center}
\begin{tabular}{@{}cccrrrr@{}}
\toprule
$\dim X$&$d$&type&$G_{d,m}$&$H_{d,m}$&$A_{d,m}$&full middle\\
\midrule
$4$&$8$&general&$14{,}714$&$350$&$343$&$344$\\
$6$&$7$&Fano&$239{,}952$&$1{,}302$&$1{,}296$&$1{,}297$\\
$6$&$8$&Calabi--Yau&$720{,}608$&$2{,}408$&$2{,}401$&$2{,}402$\\
$8$&$9$&Fano&$119{,}304{,}656$&$32{,}776$&$32{,}768$&$32{,}769$\\
\bottomrule
\end{tabular}
\end{center}

For the octic fourfold, the graded Hochschild formula
\cite[Example~2.6.3]{PolishchukVaintrobCohFT}, together with
\eqref{eq:hodge-coefficient} and the decomposition
\cite[Theorem~3.11]{Orlov}, gives
\begin{equation}\label{eq:octic-vector}
 (h_{\mathrm{prim}}^{4,0},h_{\mathrm{prim}}^{3,1},
 h_{\mathrm{prim}}^{2,2},h_{\mathrm{prim}}^{1,3},
 h_{\mathrm{prim}}^{0,4})
 =(21,2667,9331,2667,21).
\end{equation}
The primitive rational Hodge subspace inside the
$9{,}331$-dimensional complex $(2,2)$ summand has rank $343$; adding
$h^2$ gives full rational rank $344$.

For the septic sixfold the primitive Hodge vector is
\begin{equation}\label{eq:septic-vector}
 (0,1708,50288,135954,50288,1708,0),
\end{equation}
and the primitive rational middle Hodge rank is $1296$.  For the nonic
eightfold it is
\begin{equation}\label{eq:nonic-vector}
 (0,24300,2638800,27889620,58199208,
 27889620,2638800,24300,0),
\end{equation}
and the primitive rational middle Hodge rank is $32768$.

\begin{remark}
These primitive classes are not generated by divisor classes.  Weak
Lefschetz gives $H^2(X_b,\Q)=\Q h$ for a smooth hypersurface of dimension
at least four \cite[Chapter 7]{Voisin}.  Products of rational $(1,1)$
classes span only $\Q h^m$, the nonprimitive middle summand.  The
$A_{d,m}$ primitive classes produced by matrix factorizations are additional
codimension-$m$ classes.
\end{remark}

\appendix
\section{The reduced-Burau calculation}

We record the matrix facts used in Proposition~\ref{prop:finite-monodromy}.
Put $t=e^{2\pi ir/d}\ne1$.  After one branch point is fixed, the nontrivial
eigenspace has a vanishing basis $\delta_1,\ldots,\delta_{d-2}$.  The
standard braid generator $T_i$ acts as a complex reflection
\begin{equation}\label{eq:appendix-reflection}
 T_i(v)=v-(1+t)\frac{h_t(v,\delta_i)}{h_t(\delta_i,\delta_i)}\delta_i,
\end{equation}
with the usual limiting transvection interpretation when the denominator
vanishes.  Replacing $t$ by $t^{-1}$ changes conventions but not the
argument.

The Gram matrix of $h_t$ is tridiagonal.  Its diagonal entries are nonzero,
its adjacent entries are nonzero, and all other entries vanish.  Thus its
incidence graph is the connected chain
\[
 \delta_1-\delta_2-\cdots-\delta_{d-2}.
\]
Diagonalizing the associated Toeplitz form by the sine vectors
\[
 \left(\sin\frac{\pi kj}{d-1}\right)_{j=1}^{d-2},
 \qquad 1\le k\le d-2,
\]
gives signature $(r-1,d-r-1)$, up to order.  This agrees with the
Hodge-number formula for the cyclic-cover eigenspace arising from the mixed
Hodge structure \cite{Steenbrink}; the invariant Burau form itself is the
one of \cite{Squier}.

For completeness, let $W\ne0$ be invariant under all $T_i$ and take
$0\ne v\in W$.  If $h_t(v,\delta_i)=0$ for every $i$, nondegeneracy gives
$v=0$, a contradiction.  Therefore \eqref{eq:appendix-reflection} shows
that $W$ contains some $\delta_i$.  Applying the adjacent reflections and
using the nonzero neighboring Gram entries gives every $\delta_j$.  Hence
$W$ is the whole eigenspace.  This proves irreducibility.  If the signature
is indefinite, a finite image would preserve both $h_t$ and an averaged
positive-definite form, contradicting Schur's lemma.  At $t=-1$, one of the
limiting transvections is nontrivial and unipotent, so its powers are
distinct.  These are exactly the assertions used in
Proposition~\ref{prop:finite-monodromy}.

\section{Arithmetic checks}

The numerical rows above follow without floating-point approximation.  For
$m=2,d=8$,
\[
 \begin{aligned}
 P_{8,2}&=\frac{7^6+7}{8}=14{,}707,
 &G_{8,2}&=14{,}714,\\
 A_{8,2}&=343,
 &H_{8,2}&=350.
 \end{aligned}
\]
For $m=3$,
\[
 \begin{aligned}
 P_{7,3}&=\frac{6^8+6}{7}=239{,}946,
 &G_{7,3}&=239{,}952,\\
 A_{7,3}&=1{,}296,
 &H_{7,3}&=1{,}302,
 \end{aligned}
\]
and
\[
 \begin{aligned}
 P_{8,3}&=\frac{7^8+7}{8}=720{,}601,
 &G_{8,3}&=720{,}608,\\
 A_{8,3}&=2{,}401,
 &H_{8,3}&=2{,}408.
 \end{aligned}
\]
For $m=4,d=9$,
\[
 \begin{aligned}
 P_{9,4}&=\frac{8^{10}+8}{9}=119{,}304{,}648,
 &G_{9,4}&=119{,}304{,}656,\\
 A_{9,4}&=32{,}768,
 &H_{9,4}&=32{,}776.
 \end{aligned}
\]
Adding $d-1$ to $P_{d,m}$ and $A_{d,m}$ gives the graded topological and
Hodge ranks, respectively.  Adding one to $A_{d,m}$ gives the full middle
rational Hodge rank.  The vectors
\eqref{eq:octic-vector}--\eqref{eq:nonic-vector} follow from the finite
binomial sum \eqref{eq:finite-binomial}.

\section*{AI disclosure}

All mathematical ideas in this article originate with the authors, and the
work was inspired by the two articles on the noncommutative Hodge conjecture
by Xun Lin and Alexander Perry, respectively, \cite{Lin,Perry}.  In
interaction with the authors, GPT-5.6 Pro assisted with verification,
calculations, and certain technical proofs, reviewed the manuscript and
suggested necessary corrections, and polished the language.  The main body
was written by the authors, who bear sole responsibility for the correctness
of all statements and proofs.


\begin{thebibliography}{PV16}

\bibitem[Aok87]{Aoki}
N.~Aoki,
\emph{Some new algebraic cycles on Fermat varieties},
J. Math. Soc. Japan \textbf{39} (1987), no.~3, 385--396.

\bibitem[BFK14]{BFK}
M.~Ballard, D.~Favero, and L.~Katzarkov,
\emph{A category of kernels for equivariant factorizations and its
implications for Hodge theory},
Publ. Math. Inst. Hautes \`Etudes Sci. \textbf{120} (2014), 1--111.

\bibitem[Bla16]{Blanc}
A.~Blanc,
\emph{Topological $K$-theory of complex noncommutative spaces},
Compos. Math. \textbf{152} (2016), no.~3, 489--555.

\bibitem[CDK95]{CDK}
E.~Cattani, P.~Deligne, and A.~Kaplan,
\emph{On the locus of Hodge classes},
J. Amer. Math. Soc. \textbf{8} (1995), no.~2, 483--506.

\bibitem[Dol12]{Dolgachev}
I.~V.~Dolgachev,
\emph{Classical Algebraic Geometry: A Modern View},
Cambridge University Press, Cambridge, 2012.

\bibitem[Dyc11]{Dyckerhoff}
T.~Dyckerhoff,
\emph{Compact generators in categories of matrix factorizations},
Duke Math. J. \textbf{159} (2011), no.~2, 223--274.

\bibitem[Efi18]{Efimov}
A.~I.~Efimov,
\emph{Cyclic homology of categories of matrix factorizations},
Int. Math. Res. Not. IMRN (2018), no.~12, 3834--3869.

\bibitem[FK14]{FunarKohno}
L.~Funar and T.~Kohno,
\emph{On Burau representations at roots of unity},
Geom. Dedicata \textbf{169} (2014), 145--163.

\bibitem[Kal17]{Kaledin}
D.~Kaledin,
\emph{Spectral sequences for cyclic homology},
in \emph{Algebra, Geometry, and Physics in the 21st Century},
Progr. Math., vol.~324, Birkh\"auser/Springer, Cham, 2017, 99--129.

\bibitem[Lin23]{Lin}
X.~Lin,
\emph{Noncommutative Hodge conjecture},
J. Noncommut. Geom. \textbf{17} (2023), no.~4, 1367--1389.

\bibitem[Orl09]{Orlov}
D.~Orlov,
\emph{Derived categories of coherent sheaves and triangulated categories
of singularities},
in \emph{Algebra, Arithmetic, and Geometry: In Honor of Yu.~I.~Manin},
vol.~II, Progr. Math., vol.~270, Birkh\"auser Boston, Boston, MA, 2009,
503--531.

\bibitem[Per22]{Perry}
A.~Perry,
\emph{The integral Hodge conjecture for two-dimensional Calabi--Yau
categories},
Compos. Math. \textbf{158} (2022), no.~2, 287--333.

\bibitem[PV12]{PolishchukVaintrob}
A.~Polishchuk and A.~Vaintrob,
\emph{Chern characters and Hirzebruch--Riemann--Roch formula for matrix
factorizations},
Duke Math. J. \textbf{161} (2012), no.~10, 1863--1926.

\bibitem[PV16]{PolishchukVaintrobCohFT}
A.~Polishchuk and A.~Vaintrob,
\emph{Matrix factorizations and cohomological field theories},
J. Reine Angew. Math. \textbf{714} (2016), 1--122.

\bibitem[Pre11]{Preygel}
A.~Preygel,
\emph{Thom--Sebastiani and duality for matrix factorizations},
arXiv:1101.5834, 2011.

\bibitem[Ran80]{Ran}
Z.~Ran,
\emph{Cycles on Fermat hypersurfaces},
Compos. Math. \textbf{42} (1980), no.~1, 121--142.

\bibitem[Shi79]{Shioda}
T.~Shioda,
\emph{The Hodge conjecture for Fermat varieties},
Math. Ann. \textbf{245} (1979), no.~2, 175--184.

\bibitem[Squ84]{Squier}
C.~C.~Squier,
\emph{The Burau representation is unitary},
Proc. Amer. Math. Soc. \textbf{90} (1984), no.~2, 199--202.

\bibitem[Ste77]{Steenbrink}
J.~H.~M.~Steenbrink,
\emph{Mixed Hodge structure on the vanishing cohomology},
in \emph{Real and Complex Singularities}, Sijthoff and Noordhoff,
Alphen aan den Rijn, 1977, 525--563.

\bibitem[Ven14]{Venkataramana}
T.~N.~Venkataramana,
\emph{Image of the Burau representation at $d$-th roots of unity},
Ann. of Math. (2) \textbf{179} (2014), no.~3, 1041--1083.

\bibitem[Voi02]{Voisin}
C.~Voisin,
\emph{Hodge Theory and Complex Algebraic Geometry I},
Cambridge Studies in Advanced Mathematics, vol.~76,
Cambridge University Press, Cambridge, 2002.

\bibitem[Zuc77]{Zucker}
S.~Zucker,
\emph{The Hodge conjecture for cubic fourfolds},
Compos. Math. \textbf{34} (1977), no.~2, 199--209.

\enlargethispage{5\baselineskip}
\end{thebibliography}
\end{document}